\newcommand\reallytiny{\@setfontsize\reallytiny{5}{6}}
\def\Gayd{^{\ku \Gamma}_{\ku \Gamma}\mathcal{YD}}
\renewcommand{\Vec}{\text{Vec}}
\newcommand{\qf}{{\mathfrak q}}
\newcommand{\cC}{{\mathcal C}}
\newcommand{\cZ}{{\mathcal Z}}
\newcommand{\Supp}{{\operatorname{supp}}}
\newcommand{\Alg}{{\operatorname{Alg}}}
\newcommand{\ku}{{\Bbbk}}
\newcommand{\Z}{{\mathbb Z}}
\newcommand{\hc}{\mathcal{HC}}
\newcommand{\comod}[1]{^{#1}{\!\mathcal{M}}{}}
\newcommand{\id}{\operatorname{id}}
\newcommand{\cB}{\mathcal{B}}
\newcommand{\cD}{\mathcal{D}}
\newcommand{\Obj}{\mbox{\rm Obj\,}}
\newcommand{\Hom}{\operatorname{Hom}}
\newcommand{\gr}{\mbox{\rm gr\,}}
\theoremstyle{plain}
\numberwithin{equation}{section}
\newtheorem{theorem}{Theorem}[section]
\newtheorem{corollary}[theorem]{Corollary}
\newtheorem{proposition}[theorem]{Proposition}
\newcommand{\xdownarrow}[1]{%
  {\left\downarrow\vbox to #1{}\right.\kern-\nulldelimiterspace}
}
\theoremstyle{definition}
\newtheorem{definition}[theorem]{Definition}
\newtheorem{example}[theorem]{Example}
\theoremstyle{remark}
\newtheorem{remark}[theorem]{Remark}
\begin{document}

\title[Zestings of Hopf Algebras]{Zestings of Hopf Algebras}

\author[I. Angiono]{Ivan Angiono}
\address{Famaf-CIEM (CONICET), Universidad Nacional de C\'ordoba, C\'ordoba, Rep\'ublica Argentina}
\email{ivan.angiono@unc.edu.ar}

\author[C. Galindo]{C\'esar Galindo}
\address{Departamento de Matem\'aticas, Universidad e los Andes, Bogot\'a, Colombia}
\email{cn.galindo1116@uniandes.edu.co}

\author[G. Mora]{Giovanny Mora}
\address{Departamento de Matem\'aticas, Universidad de los Andes, Bogot\'a, Colombia}
\email{hg.mora@uniandes.edu.co}

\begin{abstract}
We extend the previously established zesting techniques from fusion categories to general tensor categories. In particular we consider the category of comodules over a Hopf algebra, providing a detailed translation of the categorical zesting construction into explicit Hopf algebraic terms: we show that the associative zesting of the category of comodules yields a coquasi-Hopf algebra whose comodule category is precisely the zested category. We explicitly write the modified multiplication and the associator, as well as the structures involved in the braided case. 

For pointed Hopf algebras, we derive concrete formulas for constructing zestings and establish a systematic approach for cyclic group gradings, providing explicit parameterizations of the zesting data.
\end{abstract}


\date{\today}

\thanks{I.A. was partially supported by Conicet, SeCyT (UNC) and MinCyT. C.G. was partially supported by Grant INV-2023-162-2830 from the
School of Science of Universidad de los Andes. G.M. was partially supported by Grant INV-2024-187-3052 from the School of Science of Universidad de los Andes. G.M. would like to thank the hospitality and excellent working conditions of the FAMAF at Universidad Nacional de Córdoba, where he carried out part of this research during an academic visit.}
\maketitle

\section{Introduction}

Tensor categories can be thought as axiomatizations of symmetries extending representation theory. 
The profound interplay between tensor categories and 
various areas of mathematics and mathematical physics, including low-dimensional topology, quantum field theory, and quantum computation \cite{EGNO15, BakK2001, Turaev_1994,Wang2010TQC},  has been a driving force behind their development in recent decades.

Hopf algebras provide examples of tensor categories throught the categories of their modules and comodules. Moreover, they often provide the algebraic realization of tensor categories endowed with fiber functors, a connection formalized by Tannakian reconstruction theory \cite{DeligneMilne1982, coquasitriangular1992, JoyalStreet1991_Tannaka,EGNO15}.
The study of Hopf algebras was significantly powered by this fact, along with the emergence of quantum groups \cite{Drinfel1988quantum, Jimbo1985, Lusztig2010, Jantzen1996}. 
Aiming to understand their structures, different classification problems of finite-dimensional Hopf algebras were tackled. One of them was the semisimple case, related to what are called fusion categories. A second one corresponds to pointed Hopf algebras, thought of as generalisations of quantum groups. 
The latter question led to techniques such as the lifting method \cite{AndruskiewitschSchneider2010}.

Within the theory of fusion categories \cite{ENO}, \emph{zesting} has emerged as a construction that provides a method to generate new categories by modifying the monoidal structure (and braiding, if applicable) of an existing $G$-graded category using cohomological data $(\gamma,\lambda, \omega)$ associated with the grading group $G$, \cite{zesting}. Associative zesting alters the tensor product and associativity constraints, while braided zesting further modifies the braiding to produce new braided tensor categories.

Zesting techniques have been used to construct new examples of modular categories and have provided insights into theories with otherwise indistinguishable modular data \cite{DelaneyKimPlavnik2021arxiv}. Although zesting is recognized for its utility as a categorical tool in generating and analyzing these structures, its direct translation into the algebraic framework of general Hopf algebras -extending beyond the semisimple case - motivates further investigation.

In this paper, we develop a  framework for zestings of Hopf algebras, directly extending the established zesting techniques from fusion categories to general tensor categories, specifically focusing on categories of comodules $\comod{H}$ over a Hopf algebra $H$. We show that an associative zesting applied to $\comod{H}$ naturally gives the category of comodules over a coquasi-Hopf algebra \cite{Drinfeld1990_almost, Majid1995Foundations}, denoted $H^\lambda$, whose multiplication comes from appropriately modifying that of  $H$. We explicitly describe $H^\lambda$ by providing  formulas for its modified multiplication, which incorporates the 2-cocycle $\lambda$ from the zesting data, and its non-trivial associator $\Omega$,  determined by the zesting 3-cochain $\omega$ and some additional data. Associative zesting is further extended to the braided setting: if $H$ is coquasitriangular, we show how braided zesting data endows $H^\lambda$ with a coquasitriangular coquasi-Hopf algebra structure, with an explicitly modified $r$-form.

As an application of our general theory, we consider the the class of pointed Hopf algebras. Moreover we concentrate on Hopf algebras of the form $H = \mathcal{B} \# \ku \Gamma$, where $\mathcal{B}$ is a (pre-)Nichols algebra of a Yetter-Drinfeld module $V$ over a group $\Gamma$ \cite{And-Nicholsalgebras}. This focus is motivated because under standard hypotheses (char $\ku=0$, $\ku$ algebraically closed), any finite-dimensional pointed Hopf algebra over an abelian group is a Hopf 2-cocycle deformation of its associated graded Hopf algebra, $\operatorname{gr}H$. The Hopf algebra
$\operatorname{gr}H$ is itself isomorphic to a bosonization of the (pre-)Nichols algebra $\mathcal{B}$ of its primitive elements by the group algebra of its group-like elements $G(H) \cong \Gamma$, i.e., $\operatorname{gr}H \cong \mathcal{B} \# \ku G(H)$ \cite{An13, AnGI}. Given that 2-cocycle deformations induce equivalences between the respective categories of comodules as tensor categories, understanding the zesting of $\comod{\mathcal{B} \# \ku \Gamma}$ provides  insights into the zesting of general finite-dimensional pointed Hopf algebras. For this class, we leverage their universal grading structure, determined by the Hopf cocenter \cite{Andruskiewitsch_Witsch_1996, chirvasitu2016hopf}, to derive concrete formulas for constructing associative zesting data. We establish a systematic approach for zesting with respect to cyclic group gradings, providing explicit parameterizations of the zesting data $(\gamma, \lambda, \omega)$. Our constructions are versatile, covering zesting for pointed Hopf algebras built from braided vector spaces of both diagonal and non-diagonal types. To illustrate the computations techniques, we construct new families of coquasi-Hopf algebras through detailed examples of associative zestings. These include zestings of Nichols algebras of super type $A(1|2)$ (of diagonal type) \cite{AnAn} and the Fomin-Kirillov algebra in three variables (of non-diagonal type).

The paper is organized as follows. Section \ref{section:prelim} provides essential background on tensor categories, Hopf algebras (including coquasi-Hopf and coquasitriangular Hopf algebras), and the categorical zesting construction for $G$-graded tensor categories. In Section \ref{sec:zesting-hopf}, we establish the core of our framework: we discuss the universal grading of Hopf algebras, translate the associative zesting data into Hopf algebraic terms involving relative $r$-forms (related to concepts in \cite{AGP}) and group cocycles, and use Tannakian reconstruction to describe explicitly the resulting coquasi-Hopf algebra $H^\lambda$. This section also addresses braided zesting. Section \ref{sec:examples} focuses on concrete applications, particularly to pointed Hopf algebras. We detail zesting data for pointed Hopf algebras, provide 
explicit constructions for associative zesting by cyclic groups  and present the aforementioned examples, providing new coquasi-Hopf algebras, together with examples 
\cite{bontea2017pointed} in the braided case.

\section{Preliminaries}\label{section:prelim}
In this section, we establish notation and recall the basic definitions and properties of tensor categories, Hopf algebras, and the zesting construction.

\subsection{Tensor categories and Hopf algebras}
Throughout this paper, $\ku$ denotes a field. All vector spaces, (co)algebras, etc. are over $\ku$, and (co)algebra means (co)associative (co)algebra with (co)unity.

\smallbreak 

We assume familiarity with coalgebras and Hopf algebras \cite{M93,R11}. Given a coalgebra $C$, following Sweedler's notation, we write $\Delta(c) = c_{(1)} \otimes c_{(2)}$ for $c \in C$. For iterated coproducts, we write $(\Delta \otimes \id)(\Delta(c)) = c_{(1)} \otimes c_{(2)} \otimes c_{(3)}$, and so on. By $C^{cop}$ we denote the co-opposite coalgebra of $H$, with co-multiplication $\Delta^{cop}(c) = c_{(2)} \otimes c_{(1)}$. We denote by $*$ the convolution product in the dual space $C^*$, defined by $(f * g)(c) = f(c_{(1)})g(c_{(2)})$ for $f,g \in C^*$ and $c \in C$.

\smallbreak 

We also assume familiarity with the basic results of the theory of tensor categories \cite{EGNO15}: We just recall that a tensor category over $\ku$ is a $\ku$-linear abelian category $\cC$ equipped with a bilinear tensor product functor $\otimes: \cC \times \cC \to \cC$, a simple unit object $\mathbf{1}$, and natural isomorphisms for associativity, left and right unit constraints, satisfying coherence conditions and rigidity.

Given a Hopf algebra $H$, the category of finite-dimensional $H$-modules is a tensor category; we will recall above that the category of finite-dimensional 
$H$-comodules is also a tensor category.

A tensor category is \emph{pointed} if all its simple objects are invertible with respect to the tensor product. Given a tensor category $\cC$, we denote by $\cC_{pt}$ the maximal pointed subcategory, which is the subcategory generated by all invertible objects of $\cC$. The set of isomorphism classes of invertible objects in $\cC$ forms a group under the tensor product.

\subsubsection{Coquasi-Hopf algebras}

A \emph{coquasi-bialgebra} $(H,m,u,\omega,\Delta,\varepsilon)$ consists of a coalgebra $(H,\Delta,\varepsilon)$, coalgebra maps $m:H\otimes H\to H$ (the multiplication) and $u:\ku\to H$ (the unit), and a convolution invertible element $\omega\in(H\otimes H\otimes H)^*$ such that
\begin{align*}
(h_{(1)}k_{(1)})l_{(1)}\omega(h_{(2)},k_{(2)},l_{(2)}) &= \omega(h_{(1)},k_{(1)},l_{(1)})h_{(2)}(k_{(2)}l_{(2)}), \\
1_Hh &= h1_H = h, \\
\omega(h_{(1)}k_{(1)},l_{(1)},t_{(1)})\omega(h_{(2)},k_{(2)},l_{(2)}t_{(2)}) &= \omega(h_{(1)},k_{(1)},l_{(1)})  \\
&\qquad \times \omega(h_{(2)},k_{(2)}l_{(2)},t_{(1)})\omega(k_{(3)},l_{(3)},t_{(2)}), \\
\omega(h,1_H,k) &= \varepsilon(h)\varepsilon(k),
\end{align*}
for all $h,k,l,t\in H$.

A \emph{coquasi-Hopf algebra} is a coquasi-bialgebra $H$ with a linear map (called antipode) $S:H\to H$ and elements $\alpha,\beta\in H^*$ such that, for all $h\in H$,
\begin{align*}
S(h_{(1)})\alpha(h_{(2)})h_{(3)} &= \alpha(h)1_H, \\
h_{(1)}\beta(h_{(2)})S(h_{(3)}) &= \beta(h)1_H, \\
\omega(h_{(1)}\beta(h_{(2)}),S(h_{(3)}),\alpha(h_{(4)})h_{(5)}) &= \omega^{-1}(S(h_{(1)}),\alpha(h_{(2)})h_{(3)}\beta(h_{(4)}),S(h_{(5)})) = \varepsilon(h).
\end{align*}

These structures are direct generalizations of Hopf algebras: In fact, when $\omega$ is trivial (that is, $\omega(h,k,l) = \varepsilon(h)\varepsilon(k)\varepsilon(l)$ for all $h,k,l \in H$), and $\alpha=\varepsilon=\beta$, a coquasi-Hopf algebra reduces to an ordinary Hopf algebra.

The group of group-like elements of $H$, denoted by $G(H)$, consists of all non-zero elements $g \in H$ such that $\Delta(g) = g \otimes g$. Let $g_1,g_2 \in G(H)$. An element $h \in H$ is called $(g_1,g_2)$-\emph{skew-primitive} if $\Delta(h) = h \otimes g_1 + g_2 \otimes h$. The set of $(g_1,g_2)$-skew-primitive elements is denoted by $P_{g_1,g_2}(H)$.

\subsubsection{Comodule categories over a coquasi-bialgebra}

Let $H$ be a coquasi-bialgebra. Let $\comod{H}$ denote the category of left $H$-comodules. For $X \in \comod{H}$, we denote its coaction by $\rho_X: X \to H \otimes X$, and write $\rho_X(x) = x_{(-1)} \otimes x_{(0)}$ using again Sweedler's notation. We recall now the monoidal structure of the  category $\comod{H}$. The tensor product of $H$-comodules as vector spaces is the tensor product over $\ku$ with coaction $\rho_{X\otimes Y}(x\otimes y)=x_{(-1)}y_{(-1)}\otimes x_{(0)}\otimes y_{(0)}$. The associativity constraints are:
\begin{align*}
a_{X,Y,Z} : (X \otimes Y) \otimes Z &\longrightarrow X \otimes (Y \otimes Z)\\
x \otimes y\otimes z &\mapsto  \omega(x_{(-1)}, y_{(-1)}, z_{(-1)})x_{(0)} \otimes (y_{(0)} \otimes z_{(0)})
\end{align*}
for $x \in X$, $y \in Y$, $z \in Z$ and $X,Y,Z \in \comod{H}$. The unit object is $\ku$ with $\rho_\ku(1)=1_H\otimes 1$.

By Tannakian reconstruction  \cite{coquasitriangular1992}, a tensor category $\mathcal{C}$ is equivalent to the category of comodules $\comod{H}$ of some (coquasi)-Hopf algebra $H$ if and only if there exists a (quasi)-fiber functor $F: \mathcal{C} \to \Vec_\ku$. Here, a (quasi)-fiber functor is an exact, faithful (quasi)tensor functor from $\mathcal{C}$ to $\Vec_\ku$, where $\Vec_\ku$ denotes the tensor category of $\ku$-vector spaces.

\subsubsection{Braided tensor categories and coquasitriangular Hopf algebras}

A \emph{braiding} on a tensor category $\cC$ is a natural isomorphism
\begin{align*}
c_{X,Y}:& X \otimes Y \to Y \otimes X, && X, Y \in \cC,
\end{align*}
satisfying the hexagon axioms. A \emph{braided tensor category} is a pair consisting of a tensor category and a braiding on it.

A \emph{coquasitriangular} coquasi-bialgebra is a coquasi-bialgebra $H$ with a convolution invertible linear map $r: H \otimes H \to \ku$, called an $r$-form, satisfying:

\begin{align*}
r(h_{(1)},k_{(1)})h_{(2)}k_{(2)} &= k_{(1)}h_{(1)}r(h_{(2)},k_{(2)}),\\
r(h_{(1)}, k_{(1)}l_{(1)})\omega^{-1}(h_{(2)}, k_{(2)}, l_{(2)}) &= \omega(k_{(1)}, l_{(1)},h_{(1)})r(h_{(2)}, l_{(2)})\\
&\quad \times \omega^{-1}(k_{(2)}, h_{(3)},l_{(3)})r(h_{(4)}, k_{(3)}),
\\
r(h_{(1)}k_{(1)},l_{(1)})\omega(h_{(2)}, k_{(2)}, l_{(2)}) &= \omega^{-1}(l_{(1)}, k_{(1)},h_{(1)})r(h_{(2)}, l_{(2)})\\
&\quad \times \omega(h_{(3)}, l_{(3)},k_{(2)})r(k_{(3)}, l_{(4)}),
\end{align*}
for all $h, k, l \in H$.

Given an $r$-form on $H$, we can define a braiding on $\comod{H}$ by:
\begin{align*}
c_{X,Y}: & X \otimes Y \to Y \otimes X, &&x \otimes y \mapsto r(x_{(-1)}, y_{(-1)})y_{(0)} \otimes x_{(0)}.
\end{align*}

\subsection{Zesting of Tensor Categories}

Zesting provides a method to construct new tensor categories from existing ones. Originally defined for fusion categories \cite{zesting}, the construction applies to any tensor category faithfully graded over a group. In this subsection, we recall the basic definitions for strict tensor categories (sufficient for our purpose of studying comodules over Hopf algebras) and some relevant properties for our development of Hopf algebraic zesting.

\subsubsection{Relative Centralizers and the Drinfeld Center}

Before defining the zesting construction, we need to introduce the concept of relative centralizers. 

Let $\cC$ be a tensor category and $\cD \subset \cC$ a tensor subcategory. The \emph{relative centralizer} $R_\cD(\cC)$ is the tensor subcategory of the Drinfeld center $\cZ(\cC)$ whose objects are pairs $(X, \sigma_{X,-})$ where $X \in \cD$ and 
\begin{align*}
\sigma_{X,-} = \{\sigma_{X,Y}: X \otimes Y \to Y \otimes X\}_{Y \in \cC}
\end{align*}
is a family of natural isomorphisms satisfying the half-braiding condition:
\begin{align*}
\sigma_{X, Y \otimes Z} = (\id_Y \otimes \sigma_{X,Z}) (\sigma_{X,Y} \otimes \id_Z)
\end{align*}
for all $Y, Z \in \cC$. 

\subsubsection{Associative zesting}\label{subsection: associative zesting}

With the notion of relative centralizers established, we now define the associative zesting construction for tensor categories.

Let $G$ be a  group. A tensor category $\mathcal{C}$ is $G$-graded if there is a decomposition
\begin{align*}
\mathcal{C} = \bigoplus_{g\in G} \mathcal{C}_g
\end{align*}
of $\mathcal{C}$ into a direct sum of full abelian subcategories such that the tensor product of $\mathcal{C}$ maps $\mathcal{C}_g \times \mathcal{C}_h$ to $\mathcal{C}_{gh}$ for all $g, h \in G$. We will say that the $G$-grading is faithful if $\mathcal{C}_g \neq 0$ for all $g \in G$.

\begin{definition}\label{def:zesting tensor category}
Let $G$ be a group and $\cC = \bigoplus_{g \in G} \cC_g$ a faithfully $G$-graded strict tensor category. An \emph{associative $G$-zesting} for $\cC$ consists of a pair $(\gamma,\lambda, \omega)$ where:
\begin{enumerate}[leftmargin=*]
\item $\lambda: G \times G \to \Obj\left([R_{\cC_e}(\cC)]_{pt}\right)$ is a map such that each $\lambda(g_1, g_2)$ is an invertible object with $\lambda(g_1, e) = \lambda(e, g_1) = \mathbf{1}$ for all $g_1$.
\item For each triple $(g_1, g_2, g_3) \in G^{\times 3}$, an isomorphism 
\begin{align*}
\omega_{g_1,g_2,g_3}: \lambda(g_1,g_2) \otimes \lambda(g_1g_2,g_3) \to \lambda(g_2,g_3) \otimes \lambda(g_1,g_2g_3)
\end{align*}
\end{enumerate}
satisfying $\omega_{g_1,e,g_2} = \id$ and the associative zesting condition for all $g_1, g_2, g_3, g_4 \in G$:
\begin{align}
\begin{aligned}\label{eq:zesting-pentagon}
&(\omega_{g_2,g_3,g_4} \otimes \id_{\lambda(g_1,g_2g_3g_4)}) (\id_{\lambda(g_2,g_3)} \otimes \omega_{g_1,g_2g_3,g_4})((\omega_{g_1,g_2,g_3}) \otimes \id_{\lambda(g_1g_2g_3,g_4)}) \\
&= (\id_{\lambda(g_3,g_4)} \otimes \omega_{g_1,g_2,g_3g_4})(\sigma_{\lambda(g_1,g_2),\lambda(g_3,g_4)} \otimes \id_{\lambda(g_1g_2,g_3g_4)})(\id_{\lambda(g_1,g_2)} \otimes \omega_{g_1g_2,g_3,g_4}).
\end{aligned}
\end{align}
where $\sigma$ denotes the half-braiding associated to objects in $R_{\cC_e}(\cC)_{pt}$.
\end{definition}
\begin{remark}
Definition \ref{def:zesting tensor category} corresponds exactly with \cite[Definition 3.2]{zesting} where equation \eqref{eq:zesting-pentagon} corresponds to \cite[Figure 2]{zesting}. 

\end{remark}

Given an associative $G$-zesting datum $(\gamma,\lambda, \omega)$ for a strict tensor category $\mathcal{C}$, we can construct a new tensor category $\mathcal{C}^{(\gamma,\lambda, \omega)}$. This category has the same underlying  $G$-graded category $\mathcal{C}$, the unit object remains $\mathbf{1}$ but possesses a modified tensor product $\otimes^\lambda$ and a new associativity constraint $a^\lambda$.
\begin{enumerate}[leftmargin=*, label=(\roman*)]
\item The modified tensor product for objects $X\in \mathcal{C}_{g_1}, Y\in \mathcal{C}_{g_2}$ is defined as:
\[
    X \otimes^{\lambda} Y := X \otimes Y \otimes \lambda(g_1, g_2)
\]

\item The associativity constraint
$a^\lambda_{X,Y,Z}: (X\otimes^{\lambda} Y)\otimes^{\lambda} Z \to X\otimes^{\lambda} (Y\otimes^{\lambda} Z)$
for $X\in \mathcal{C}_{g_1}, Y\in \mathcal{C}_{g_2}, Z\in \cC_{g_3}$ is defined by the composition shown in Figure \ref{fig:commutative-diagram-revised}.
\end{enumerate}
\begin{figure}[ht]
\centering
\begin{tikzcd}[column sep=0.5em, row sep=3.5em] 
X \otimes Y \otimes \lambda(g_1,g_2) \otimes Z \otimes \lambda(g_1g_2,g_3)
\arrow[d, "{\mathrm{id}_{X\otimes Y} \otimes \sigma_{\lambda(g_1,g_2), Z} \otimes \mathrm{id}_{\lambda(g_1g_2,g_3)}}"'] \\ 
X \otimes Y \otimes Z \otimes \lambda(g_1,g_2) \otimes \lambda(g_1g_2,g_3)
\arrow[d, "{\mathrm{id}_{X\otimes Y\otimes Z} \otimes \omega_{g_1, g_2,g_3}}"'] \\ 
X \otimes Y \otimes Z \otimes \lambda(g_2,g_3) \otimes \lambda(g_1,g_2g_3)
\end{tikzcd}
\caption{Definition of associator $a^\lambda$ in $\cC^{(\lambda,\omega)}$. }
\label{fig:commutative-diagram-revised} 
\end{figure}

The associative zesting condition \eqref{eq:zesting-pentagon} is precisely what ensures that the newly defined $a^\lambda$ satisfies the pentagon axiom, thereby making $(\mathcal{C}^{(\gamma,\lambda, \omega)}, \otimes^\lambda, \mathbf{1}, a^\lambda)$ a (non-strict) tensor category. The proof given in \cite[Proposition 3.4]{zesting} does not rely on semisimplicity and hence is valid for arbitrary tensor categories.

\subsubsection{Braided zesting}\label{subsubsec: braided zesting}

In this subsection, we discuss the braided zesting construction, drawing from \cite{zesting}. We emphasize that this construction is applicable in non-semisimple settings and for tensor categories graded by groups $G$ which are not necessarily finite. Before presenting the definition of the braided zesting datum, we introduce some relevant concepts concerning tensor automorphisms of the identity functor and their relation to the grading.

For any tensor category $\mathcal{C}$, let $\text{Aut}_\otimes(\text{Id}_\mathcal{C})$ denote the abelian group consisting of tensor natural $\ku$-linear isomorphisms of the identity functor.
Now, assume $\mathcal{C}$ is faithfully graded by a group $G$. Let $\widehat{G} = \Hom(G, \ku^*)$ be the group of linear characters of $G$. Each character $\gamma \in \widehat{G}$ defines a tensor automorphism $\Phi_\gamma \in \text{Aut}_\otimes(\text{Id}_\mathcal{C})$ whose component on any object $X_g$ in the subcategory $\mathcal{C}_g$ (corresponding to grade $g \in G$) is given by scalar multiplication:
\begin{align*}
    (\Phi_\gamma)_{X_g} := \gamma(g)  \id_{X_g}, \quad \text{for } X_g \in \Obj(\mathcal{C}_g).
\end{align*}
The map $\Phi: \gamma \mapsto \Phi_\gamma$ is an injective group homomorphism from $\widehat{G}$ into $\text{Aut}_\otimes(\text{Id}_\mathcal{C})$, where injectivity relies on the faithful grading. We denote the image of this map—the subgroup of tensor automorphisms induced by characters—by $\text{Aut}^{\widehat{G}}_\otimes(\text{Id}_\mathcal{C})$. Natural automorphisms in this subgroup act as constant scalars on the graded components.

Now, let $\mathcal{B}$ be a braided tensor category with braiding $c$.
An invertible object $U \in \mathcal{B}$ will be called \emph{projectively transparent} if there exists a scalar $\lambda_U \in \ku^*$  such that for all objects $Y \in \mathcal{B}$,
\begin{align*}
    c_{Y,U} \circ c_{U,Y} = \lambda_U \id_{U \otimes Y}.
\end{align*}
For any invertible object $U \in \mathcal{B}$, its double braiding also gives rise to a natural isomorphism $\chi_U \in \text{Aut}_{\otimes}(\text{Id}_{\mathcal{B}})$. This $\chi_U$ acts on an object $X \in \mathcal{B}$ by the automorphism $\chi_U(X)$ of $X$, defined via the relation:
\begin{align*}
    \chi_U(X) \otimes \id_U = c_{U,X} \circ c_{X,U}.
\end{align*}
If $\mathcal{B}$ is also faithfully graded by an abelian group $G$, we are interested in cases where an invertible object $U$ is both projectively transparent and its associated $\chi_U$ is compatible with the grading. We say that $\chi_U$ is \emph{character-induced} if it belongs to the subgroup $\text{Aut}^{\widehat{G}}_{\otimes}(\text{Id}_{\mathcal{B}})$. This means there exists a unique character $\gamma_U \in \widehat{G}$ such that $\chi_U = \Phi_{\gamma_U}$. Consequently, for any homogeneous object $X_g$ of grade $g$, the action of $\chi_U$ is purely scalar, determined by its grade:
\begin{align*}
    \chi_U(X_g) = \gamma_U(g) \cdot \id_{X_g}.
\end{align*}

We adopt a common abuse of notation: sometimes $\chi_U(g)$ may denote the tensor automorphism component $\gamma_U(g) \cdot \id_{X_g}$ acting on an unspecified object $X_g$ of grade $g$ (without referring exactly to which object the natural isomorphism is acting), and at other times, $\chi_U(g)$ specifically denotes the scalar value $\gamma_U(g) \in \ku^*$. The intended meaning should be clear from the context.

Consider a braided tensor category $\mathcal{B}$ with braiding $c$, and let $\mathcal{D} \subset \mathcal{B}$ be any tensor subcategory. For any object $X \in \mathcal{D}$, the natural isomorphism $\sigma_{X,-}:=c_{-,X}^{-1}$ provides the structure of an object $(X, \sigma_{X,-})$ in the relative centralizer $R_{\mathcal{D}}(\mathcal{B})$. This yields a canonical braided functor from $\mathcal{D}$ into $R_{\mathcal{D}}(\mathcal{B})$. We utilize this inherent relative braiding structure in the subsequent definition.

\begin{definition}\label{def:cat braided zesting}
Let $\mathcal{B}$ be a strict braided tensor category faithfully graded by an abelian group $G$. A \emph{braided $G$-zesting datum} $(\lambda, \omega, t)$ for $\mathcal{B}$ consists of:
\begin{enumerate}[leftmargin=*]
    \item An associative $G$-zesting datum $(\gamma,\lambda, \omega)$, such that for every $g_1, g_2 \in G$, the automorphism $\chi_{\lambda(g_1,g_2)}$ is character-induced.
    \item A family of isomorphisms $t_{g_1,g_2} : \lambda(g_1, g_2) \longrightarrow \lambda(g_2, g_1)$ for all $g_1, g_2 \in G$.
\end{enumerate}
These data must satisfy the following \emph{braided zesting conditions} for all $g_1, g_2, g_3 \in G$:
\begin{align}
\begin{aligned}
    \omega_{g_2,g_3,g_1}& ( \id_{\lambda(g_2, g_3)} \otimes t_{g_1,g_2g_3})\omega_{g_1,g_2,g_3} \\
    &= (t_{g_1,g_2}\otimes \id_{\lambda(g_1g_2,g_3)}) \omega_{g_2,g_1,g_3}(t_{g_1,g_2}\otimes \id_{\lambda(g_1g_2,g_3)}) \label{eq:bz-cond1-alt} 
\end{aligned}
\end{align}
and, using the abuse of notation $\chi_U(g)$ for the scalar $\gamma_U(g)$:
\begin{align}
\begin{aligned}
    &\chi_{\lambda(g_1,g_2)}(g_3) \cdot \left( \omega_{g_3,g_1,g_2}^{-1}(\id_{\lambda(g_3,g_1)} \otimes t_{g_1g_2,g_3})\omega_{g_1,g_2,g_3}^{-1} \right) \\ 
    &= (t_{g_2,g_3}\otimes \id_{\lambda(g_1,g_2g_3)}) \omega_{g_1,g_3,g_2} (t_{g_1,g_3}\otimes \id_{\lambda(g_3g_1,g_2)})  \label{eq:bz-cond2-alt} 
\end{aligned}
\end{align}
\end{definition}

\begin{remark}
Definition \ref{def:cat braided zesting} is an adaptation of \cite[Definition 4.1]{zesting} where the maps $j$'s in \cite{zesting} are trivial. The condition \cite[(BZ1)]{zesting} is exactly that $\chi_{\lambda(g_1,g_2)}$ is character induced for all $g_1,g_2\in G$. The equation \eqref{eq:bz-cond1-alt} corresponds to \cite[Figure 7]{zesting} and equation \eqref{eq:bz-cond2-alt} corresponds to \cite[Figure 8]{zesting} and this two conditions correspond to \cite[(BZ2)]{zesting}.
\end{remark}

Given a braided $G$-zesting datum $(\lambda, \omega, t)$ for a braided tensor category $\mathcal{B}$, we can equip the associated zested tensor category $\mathcal{B}^{(\gamma,\lambda, \omega)}$ with a braiding, denoted $c^{(t)}$.

For any objects $X \in \mathcal{B}_{g_1}$ and $Y \in \mathcal{B}_{g_2}$, the braiding isomorphism $c^{(t)}_{X,Y}$ is defined as
\begin{align}
    \label{eq:braiding_zesting} 
    c^{(t)}_{X,Y} := c_{X,Y} \otimes t_{g_1,g_2} : X \otimes^\lambda Y \longrightarrow Y \otimes^\lambda X
\end{align}

Again, the proof given in \cite[Proposition 4.4]{zesting} regarding the validity of the hexagon axioms does not rely on semisimplicity and hence holds for arbitrary graded braided tensor categories.

\section{Zesting for Hopf Algebras}\label{sec:zesting-hopf}

In the previous section, we reviewed the general categorical framework for zesting of tensor categories. Now, we specialize this construction to the important setting of categories of comodules over Hopf algebras. Our goal is to translate the categorical zesting data into Hopf algebraic terms. We will show that an associative zesting of the category of comodules $\comod{H}$ leads to a coquasi-Hopf algebra $H^\lambda$ – whose comodule category is precisely the zested category $\comod{H}^\lambda$. We will explicitly describe the modified multiplication and the non-trivial associator  of $H^\lambda$, as well as the structures involved in the braided case.

\subsection{Universal Grading for Hopf Algebras}\label{subsec:universal-grading}

The zesting construction fundamentally relies on equipping the tensor category with a group grading. For fusion categories, there exists a distinguished group, the \emph{universal grading group}, which governs all possible faithful gradings of the category, \cite{nilpotent}.  Importantly, the notion of $U(H)$ also exists for the tensor category $\comod{H}$, where $H$ is any Hopf algebra, without requiring semisimplicity or finite-dimensionality. Its universality stems from the property that any faithful grading of $\comod{H}$ by a group $G$ corresponds to a unique surjective group homomorphism $U(H) \twoheadrightarrow G$. In this subsection we recall the construction of universal grading group in the Hopf algebraic setting.

\begin{definition}
Let $H, K$ be Hopf algebras over $\ku$. A Hopf algebra map $\pi: H \to K$ is called \emph{cocentral} if
\begin{align}\label{eq:cocentral-defn}
    (\mathrm{id} \otimes \pi) \circ \Delta = (\mathrm{id} \otimes \pi) \circ \Delta^{\mathrm{op}}.
\end{align}

The \emph{Hopf cocenter} of $H$, denoted $\hc(H)$, is the maximal cocentral quotient. That is, $\hc(H)$ is characterized by the following universal property: $\hc(H)$ is a quotient Hopf algebra via a cocentral epimorphism $p: H \twoheadrightarrow \hc(H)$ such that any other cocentral Hopf morphism $\pi: H \to K$ factors uniquely through $p$. 
\end{definition}

Every Hopf algebra $H$ admits a unique Hopf cocenter: The existence and uniqueness of $\hc(H)$ were established in \cite{Andruskiewitsch_Witsch_1996}.

The structure of the Hopf cocenter is closely related to the universal grading group of the category $\comod{H}$. Following \cite[\S 3]{chirvasitu2016hopf}:

\begin{definition} Let $H$ be a Hopf algebra.
\begin{enumerate}[leftmargin=*]
    \item The \emph{universal grading group} of $H$, denoted $U(H)$, is the group presented by generators $g_X$ for each isomorphism class of simple left $H$-comodules $X$, subject to the relations:
    \begin{itemize}[leftmargin=*]
        \item $g_X = g_Y$ if $X \cong Y$; 
        \item $g_{\ku} = e$ (the identity element);
        \item $g_X g_Y = g_Z$ if $Z$ is a simple subquotient of the tensor product $X \otimes Y$.
    \end{itemize}
    \item The \emph{group cocenter map} associated with $U(H)$ is the Hopf algebra epimorphism $\pi_{U(H)}: H \twoheadrightarrow \ku U(H)$ corresponding to the functor $\comod{H} \to \comod{\ku U(H)}$ that essentially maps each simple comodule $X$ to the grade $g_X \in U(H)$. 
\end{enumerate}
\end{definition}

The universal property of the Hopf cocenter implies that there is a Hopf algebra epimorphism $\hc(H) \twoheadrightarrow \ku U(H)$. Moreover, the group-like elements capture the grading group structure, as stablished in \cite[Proposition 3.10]{chirvasitu2016hopf}:

\begin{proposition}\label{gradingroup} 
Let $H$ be a Hopf algebra. There exists a group isomorphism between the group of group-like elements $G(\hc(H))$ of the Hopf cocenter $\hc(H)$ and the universal grading group $U(H)$. \qed
\end{proposition}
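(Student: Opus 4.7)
The plan is to construct mutually inverse group homomorphisms $\Phi: G(\hc(H)) \to U(H)$ and $\Psi: U(H) \to G(\hc(H))$, exploiting the universal properties of both sides and the functoriality of the relevant quotient maps.

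For $\Phi$, I first verify that the group cocenter map $\pi_{U(H)}: H \to \ku U(H)$ is cocentral. The key observation is that a faithful $G$-grading of $\comod{H}$ as a tensor category induces a Hopf algebra decomposition $H = \bigoplus_{g \in G} H_g$ in which each $H_g$ is a sub-coalgebra, i.e.\ $\Delta(H_g) \subseteq H_g \otimes H_g$; a direct computation then shows that the projection $h \in H_g \mapsto \varepsilon(h) g$ satisfies $(\id \otimes \pi) \Delta = (\id \otimes \pi) \Delta^{\op}$. Applied to the canonical faithful $U(H)$-grading this yields cocentrality of $\pi_{U(H)}$, and the universal property of the Hopf cocenter produces a unique factorization $H \xrightarrow{p} \hc(H) \xrightarrow{\phi} \ku U(H)$. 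Restriction of $\phi$ to group-likes defines $\Phi := \phi|_{G(\hc(H))}: G(\hc(H)) \to G(\ku U(H)) = U(H)$.

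For the inverse $\Psi$, I use the homogeneity consequence of cocentrality of $p$: every simple $H$-comodule $X$ becomes, under the functor $\comod{H} \to \comod{\hc(H)}$, a direct sum of copies of a single one-dimensional $\hc(H)$-comodule $\ku_{\tilde g_X}$, for a unique group-like $\tilde g_X \in G(\hc(H))$ with $(p \otimes \id)\rho_X(x) = \tilde g_X \otimes x$. Setting $\Psi(g_X) := \tilde g_X$ is well defined on the generators of $U(H)$ and respects the defining relations by monoidality of the quotient functor: the trivial comodule yields the identity, and $\tilde g_Z = \tilde g_X \tilde g_Y$ whenever $Z$ is a simple subquotient of $X \otimes Y$. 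Mutual inverseness reduces to a comparison of functors. Given $g \in G(\hc(H))$, the pullback $X_g$ of $\ku_g$ along $p$ is a one-dimensional, hence simple, $H$-comodule whose image in $\comod{\hc(H)}$ is tautologically $\ku_g$, so $\Psi(\Phi(g)) = \tilde g_{X_g} = g$. Conversely, the composite $\comod{H} \to \comod{\hc(H)} \to \comod{\ku U(H)}$ coincides with the functor induced by $\pi_{U(H)}$, hence applied to a simple $X$ it yields $\ku_{g_X}^{\oplus \dim X}$ on one side and $\ku_{\phi(\tilde g_X)}^{\oplus \dim X}$ on the other, forcing $\Phi(\Psi(g_X)) = g_X$.

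The principal technical obstacle is the homogeneity claim: for any cocentral Hopf surjection $\pi: H \to K$, every simple $H$-comodule becomes a sum of copies of a single one-dimensional $K$-comodule determined by a group-like of $K$. This is the Hopf algebraic incarnation of the categorical correspondence between cocentral Hopf quotients and faithful tensor category gradings, and its proof requires unpacking the cocentrality identity $h_{(1)} \otimes \pi(h_{(2)}) = h_{(2)} \otimes \pi(h_{(1)})$ against the matrix-coefficient sub-coalgebra of a simple $H$-comodule. A secondary subtlety, absent in the semisimple or algebraically closed setting, is that $\hc(H)$ need not be pointed over a general field, so one must ensure that all grades actually arise from group-likes rather than from higher-dimensional simples of $\hc(H)$; this is exactly what makes the $G(\hc(H))$-grading faithful and the map $\Phi$ bijective.
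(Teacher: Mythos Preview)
The paper does not prove this proposition; it simply records it with a \qed and attributes it to \cite[Proposition 3.10]{chirvasitu2016hopf}. So there is no ``paper's own proof'' to compare against, and your outline is already more than the paper offers.

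Your strategy is the natural one and works cleanly when $\ku$ is algebraically closed. The homogeneity step you single out can be argued as follows: cocentrality of $p$ makes $\bar\rho_X=(p\otimes\id)\rho_X:X\to\hc(H)\otimes X$ a morphism of $H$-comodules (with $H$ coacting only on the second tensorand on the right); for $X$ simple, Schur's lemma then forces $\bar\rho_X(x)=k\otimes x$ for a fixed $k\in\hc(H)$, and the coaction axioms make $k$ group-like. One also uses that applying $p\otimes p$ to the cocentrality identity shows $\hc(H)$ is cocommutative, hence pointed over an algebraically closed field.

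Over a general base field, however, the concern you flag at the end is not merely a technicality to be patched --- your maps actually fail. Take $\ku=\mathbb{R}$ and $H=\mathcal{O}_{\mathbb{R}}(\mathbb{Z}/4\mathbb{Z})$. Then $H$ is cocommutative, so $\hc(H)=H$ and $p=\id$. The two-dimensional simple comodule $W$ (on which the generator acts by a quarter rotation) has $(p\otimes\id)\rho_W=\rho_W$, which is \emph{not} of the form $x\mapsto g\otimes x$ for any $g\in G(H)$; thus no $\tilde g_W$ exists and your $\Psi$ is undefined on the generator $g_W\in U(H)\cong\mathbb{Z}/2\mathbb{Z}$. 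Simultaneously, both group-likes $\chi_{\pm}\in G(H)$ lie in the neutral component $H_e$, so your $\Phi$ sends both to $e$ and is not injective. The groups happen to be abstractly isomorphic here, but not via your constructions. If you want your argument to match the generality of the statement as written, you must either add the hypothesis that $\ku$ is algebraically closed (under which the Schur argument above goes through) or replace the homogeneity claim with something that survives non-trivial endomorphism division algebras of simples.
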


Let $p: H \twoheadrightarrow \hc(H)$ be the canonical projection. 
Using Proposition \ref{gradingroup} 
we can determine explicitly the components on the grading $U(H) \cong G(\hc(H))$ for any left $H$-comodule $X$: if $\rho: X \to H \otimes X$ denotes the coaction, for each $g \in U(H)$ the $g$-graded component $X_g$ is:
\begin{align*}
    X_g := \{x \in X \mid (p \otimes \mathrm{id})(\rho(x)) = g \otimes x \}.
\end{align*}
Then $X$ decomposes as a direct sum of its graded components: $X = \bigoplus_{g \in U(H)} X_g$. This decomposition 
yields a $U(H)$-grading on the tensor category $\comod{H}$:
\begin{align*}
    \comod{H} = \bigoplus_{g \in G} (\comod{H})_g,
\end{align*}
where $(\comod{H})_g$ is the full abelian subcategory of $H$-comodules $X$ such that $X = X_g$. 

Hence, given any faithful grading of $\comod{H}$ by a group $G$, the universality of $U(H)$ implies the existence of a unique surjective group homomorphism $U(H) \twoheadrightarrow G$. This group homomorphism corresponds precisely to a surjective cocentral Hopf epimorphism $\pi: H \to \ku G$ that induces the $G$-grading on $\comod{H}$. Furthermore, the neutral component $(\comod{H})_e$ of this $G$-grading is  the category of left comodules over the Hopf subalgebra of coinvariants, $H^{\mathrm{co}\pi}$.

\subsection{Associative Zesting for Hopf Algebras}\label{subsec:assoc-zesting-hopf}

Having established possible group grading on $\comod{H}$ in terms of the universal grading group $U(H)$, we now translate the categorical associative zesting data $(\gamma,\lambda, \omega)$ from Subsection \ref{subsection: associative zesting} into concrete Hopf algebraic terms. This involves identifying the algebraic counterparts of the zesting objects $\lambda(g_1, g_2)$ and the associator isomorphisms $\omega$. The key structure connecting the categorical notion of centrality (objects in $R_{(\comod{H})_e}(\comod{H})$) to Hopf algebra maps is the relative $r$-form, which we define first.

\begin{definition}\label{Braided-cotiangular}
Let $H$ be a Hopf algebra and $K$ a Hopf subalgebra of $H$. A \emph{relative $r$-form} for the inclusion $K \subset H$ is a linear map $r: H \otimes K \to \ku$ satisfying the following conditions for all $h, h' \in H$ and $k, k' \in K$:
\begin{align}
r(hh',k)&= r(h',k_{(1)}) r(h,k_{(2)}) \label{1st}, \\
r(h,kk')&=r(h_{(1)},k) r(h_{(2)},k'), \label{2nd} \\
    r(h,1) &= \epsilon(h), \, \, \, r(1,k)= \epsilon(k), \label{3rd} \\
    r(h_{(1)},k_{(1)}) k_{(2)}h_{(2)}&=h_{(1)}k_{(1)} r(h_{(2)},k_{(2)}) \label{4th}
     \end{align}
\end{definition}

As the following result shows, relative $r$-forms precisely capture the data of braided monoidal functors from $\comod{K}$ into the Drinfeld center $\mathcal{Z}(\comod{H})$ compatible  (or equivalently, the relative centralizer $R_{\comod{K}}(\comod{H})$) compatible with the forgetful functor.

\begin{theorem}\label{inclusionequivalences}
Let $H$ be a Hopf algebra and $K$ a Hopf subalgebra with a coquasitriangular structure $s$. The following sets of data are equivalent:
\begin{enumerate}[leftmargin=*, label=(\arabic*)]
\item\label{item:inclusionequivalences-relative-r-matrix} A relative $r$-form $r: H \otimes K \to \ku$ such that $r^{op}|_{K \otimes K} = s$.

\item\label{item:inclusionequivalences-braided-mon-functor} A braided monoidal functor 
\begin{align*}
F : \ \comod{K} \;\longrightarrow\; \mathcal{Z} \left( \comod{H} \right)\;=\; {}_H^H\!\mathcal{YD}
\end{align*}
whose composition with the forgetful functor 
$\mathcal{Z} \left( \comod{H} \right) \to \comod{H}$ is a fully faithful inclusion.

\item\label{item:inclusionequivalences-Hopf-map} A Hopf algebra map $\gamma : K \to (H^\circ)^{\mathrm{cop}}$ such that
\begin{align}\label{eq:inclusionequivalences-Hopf-map-comp-1}
\langle \gamma(k_{(1)}), h_{(1)} \rangle \,k_{(2)} \,h_{(2)}
&= h_{(1)}\,k_{(1)} \,\langle \gamma(k_{(2)}), h_{(2)} \rangle,
& &\text{for all }k \in K, \, h \in H. 
\\ \label{eq:inclusionequivalences-Hopf-map-comp-2}
\langle \gamma(k), k' \rangle &= s(k,k')
& &\text{for all }k,k' \in K. 
\end{align}
Here $H^\circ$ denotes the finite dual of $H$, see \cite{M93} for details. 
\end{enumerate}
\end{theorem}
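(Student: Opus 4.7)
The plan is to establish (1) $\Leftrightarrow$ (3) by direct linear-algebraic duality, and then close the triangle by constructing a braided monoidal functor from $\gamma$ (giving (3) $\Rightarrow$ (2)) and, conversely, performing a Tannakian-style reconstruction of $\gamma$ from $F$ (giving (2) $\Rightarrow$ (3)).

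\textbf{Step 1: (1) $\Leftrightarrow$ (3).} I would define the bijection $r \longleftrightarrow \gamma$ by $\gamma(k)(h) := r(h,k)$. A preliminary step is to check that each $\gamma(k)$ lies in the finite dual $H^\circ$; this follows from \eqref{1st}, which expresses $\gamma(k)(hh')$ as a finite sum of products of values of $\gamma$ on elements of the finite-dimensional subcoalgebra of $K$ generated by $k$, combined with the fundamental theorem of coalgebras. The axioms then translate cleanly: \eqref{2nd} is the multiplicativity of $\gamma$ into $(H^\circ,\,*)$; \eqref{1st} rearranges to $\Delta_{H^\circ}(\gamma(k)) = \gamma(k_{(2)}) \otimes \gamma(k_{(1)})$, which is exactly the coalgebra-map condition for $\gamma \colon K \to (H^\circ)^{\mathrm{cop}}$; \eqref{3rd} is unit and counit compatibility. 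Axiom \eqref{4th} is verbatim \eqref{eq:inclusionequivalences-Hopf-map-comp-1}, and the constraint $r^{\mathrm{op}}|_{K \otimes K} = s$ is verbatim \eqref{eq:inclusionequivalences-Hopf-map-comp-2}. Antipode-compatibility for $\gamma$ is automatic for a bialgebra map between Hopf algebras.

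\textbf{Step 2: (3) $\Rightarrow$ (2).} Given $\gamma$, I would define
\begin{align*}
F \colon \comod{K} \longrightarrow {}_H^H\mathcal{YD}, \qquad (M,\rho) \longmapsto (M, \rho, \cdot),
\end{align*}
where the $H$-coaction on $M$ is the one induced by $K \subset H$ (so that the composition with the forgetful functor $\mathcal{Z}(\comod{H}) \to \comod{H}$ is the inclusion) and the $H$-action is
\begin{align*}
h \cdot m := \langle \gamma(m_{(-1)}), h \rangle \, m_{(0)}.
\end{align*}
The verifications are: the action axioms follow from $\gamma$ being a Hopf algebra map into $(H^\circ)^{\mathrm{cop}}$; the Yetter--Drinfeld compatibility between the action and the coaction is exactly \eqref{eq:inclusionequivalences-Hopf-map-comp-1}; morphisms of $K$-comodules automatically intertwine the defined $H$-action; on the tensor product, the diagonal $H$-action and the action produced by $\gamma$ from $\rho_{M \otimes N}(m \otimes n) = m_{(-1)} n_{(-1)} \otimes m_{(0)} \otimes n_{(0)}$ agree because $\gamma$ is multiplicative and the product on $H^\circ$ is convolution; finally, the $s$-braiding on $\comod{K}$ matches the canonical braiding of the image in the center, which after evaluating scalars on $m_{(-1)}, n_{(-1)} \in K$ becomes precisely \eqref{eq:inclusionequivalences-Hopf-map-comp-2}.

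\textbf{Step 3: (2) $\Rightarrow$ (3), and the main obstacle.} Given $F$, the hypothesis on the composition with the forgetful functor guarantees that $F(M)$ has underlying $K$-comodule $M$; hence each $h \in H$ determines a natural $\ku$-linear endomorphism $\rho_h$ of the forgetful functor $\comod{K} \to \Vec_{\ku}$. I would run a Tannakian-type argument on the cogenerating family of cofree $K$-comodules $K \otimes V$ (with coaction $\Delta \otimes \mathrm{id}_V$): naturality of $\rho_h$ with respect to all $K$-comodule maps forces it to take the universal form $h \cdot m = \langle \gamma(m_{(-1)}), h\rangle \, m_{(0)}$ for a unique linear map $\gamma \colon K \to H^*$, and local finiteness of the $H$-action on finite-dimensional $K$-comodules forces $\gamma(k) \in H^\circ$. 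The structural properties of $F$ then translate one-for-one into the conditions of (3): associativity of the $H$-action together with coassociativity of $\rho$ yields the $(H^\circ)^{\mathrm{cop}}$ coalgebra-map property of $\gamma$; monoidality of $F$ yields multiplicativity; the centrality condition for $F(M)$ (tested against arbitrary $H$-comodules) yields \eqref{eq:inclusionequivalences-Hopf-map-comp-1}; and braided compatibility of $F$ with $s$ yields \eqref{eq:inclusionequivalences-Hopf-map-comp-2}. The main technical obstacle is exactly this extraction step: producing a single universal functional $\gamma(k) \in H^\circ$ that controls the $H$-action on every $K$-comodule, rather than merely a family of elements of $H^*$. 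The reduction to cofree $K$-comodules together with local finiteness of comodules over a Hopf algebra is what makes this step go through.
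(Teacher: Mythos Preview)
Your proposal is correct and follows essentially the same argument as the paper, only organized differently: the paper proves the cyclic chain $(1)\Rightarrow(2)\Rightarrow(3)\Rightarrow(1)$, while you establish $(1)\Leftrightarrow(3)$ directly and then close with $(3)\Rightarrow(2)$ and $(2)\Rightarrow(3)$. The constructions coincide (your $H$-action $h\cdot m=\langle\gamma(m_{(-1)}),h\rangle m_{(0)}$ is exactly the Yetter--Drinfeld structure the paper obtains from $r$), and for the hardest step $(2)\Rightarrow(3)$ the paper simply invokes Tannakian duality \`a la Joyal--Street after observing that the image lands in colimits of finite-dimensional $H$-modules, whereas you sketch the same reconstruction by hand via cofree comodules and local finiteness---the content is identical.
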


\begin{proof}
We prove implications cyclically. For \ref{item:inclusionequivalences-relative-r-matrix} $\Rightarrow$ \ref{item:inclusionequivalences-braided-mon-functor}, let $Y \in \comod{K}$. For each $X \in \comod{H}$ we define a natural isomorphism
\begin{align*}
c_{X,Y} &: X \otimes Y \,\longrightarrow\, Y \otimes X, &
c_{X,Y}(x \otimes y) &:= r(x_{(-1)},\,y_{(-1)}) \, y_{(0)} \otimes x_{(0)}.
\end{align*}
Its inverse is given by
\begin{align*}
\overline{c}_{X,Y} &: Y \otimes X \,\longrightarrow\, X\otimes Y, &
\overline{c}_{Y,X}(y \otimes x) &:=
r\bigl(x_{(-1)},\,S(y_{(-1)})\bigr) \, x_{(0)} \otimes y_{(0)}.
\end{align*}
We get then a functor
\begin{align*}
F &: \comod{K} \;\longrightarrow\; \mathcal{Z}(\comod{H}), & 
F(Y) &=\bigl(Y,\;c_{-,Y}\bigr),
\end{align*}
which is a tensor functor whose composition with the usual forgetful functor gives a fully faithful inclusion into $\comod{H}$.

Since $(K,s)$ is coquasitriangular, $\comod{K}$ carries a natural braided tensor structure. As $r^{op}|_{K \otimes K} = s$, $F$ is a braided monoidal functor.

\smallskip

\noindent
\ref{item:inclusionequivalences-braided-mon-functor}  $\Rightarrow$ \ref{item:inclusionequivalences-Hopf-map}
Every $H$-comodule is a colimit of its finite-dimensional subcomodules. Therefore, the image of the monoidal functor
\begin{align*}
\comod{K} \;\longrightarrow\; {}_H^H\!\mathcal{YD} \;\longrightarrow\; {}_H\!\mathcal{M}
\end{align*}  
lies in the subcategory $\underline{{}_H\mathcal{M}}$ of (possibly infinite) colimits of finite-dimensional $H$-modules. Hence we obtain an induced monoidal functor
\begin{align*}
\Gamma: \comod{K} \;\longrightarrow\; {}_H^H\!\mathcal{YD} \;\longrightarrow\;  \comod{(H^*)^{op}},
\end{align*}
which commutes with the corresponding forgetful functors in the following diagram:
\[
\begin{tikzcd}[column sep=2.3em, row sep=1.5em]
   {}^K\!\mathcal{M} 
      \arrow[rr, "\Gamma"]
      \arrow[dr]
   && 
   \comod{(H^*)^{\mathrm{op}}}
      \arrow[dl]
   \\
   & \mathrm{Vec}_{\ku} &
\end{tikzcd}
\]
By Tannakian duality (see \cite[Section~8, Proposition~4]{joyal2006introduction}), there is a unique Hopf algebra map
$\gamma : K \to (H^*)^{\mathrm{op}}$ that realizes $\Gamma$. Concretely, given $X\in\comod{K}$, the action
\begin{align*}
h \,\cdot\, x&= \bigl\langle \gamma(x_{(-1)}),\,h \bigr\rangle\,x_{(0)}, &
h \in H, & \, x \in X,
\end{align*}
recovers the monoidal structure. It remains to check the compatibility conditions: \eqref{eq:inclusionequivalences-Hopf-map-comp-2} follows since $F$ is braided. For \eqref{eq:inclusionequivalences-Hopf-map-comp-1}, fix $X\in\comod{K}$, $x\in X$, $h\in H$.
Using the Yetter–Drinfeld condition and again that $F$ is braided we get
\begin{align*}
h_{(1)} \,x_{(-2)} \,\langle \gamma(x_{(-1)}), h_{(2)}\rangle \otimes x_{(0)}
&= h_{(1)} x_{(-2)} \,\otimes\, \bigl\langle \gamma(x_{(-1)}), h_{(2)} \bigr\rangle\,x_{(0)}
\\
&= h_{(1)} x_{(-1)} \,\otimes\, h_{(2)} \,\cdot\, x_{(0)}
\\
&= (h_{(1)} \cdot x_{(-1)})\,h_{(2)} \,\otimes\, h_3 \cdot x_{(0)}
\\
&= x_{(-1)} \,h_{(2)} \,\otimes\, \bigl\langle \gamma(x_{(-2)}), h_{(1)} \bigr\rangle \,x_{(0)}
  \\[4pt]
  &=
  x_{(-1)}\,h_{(2)}\,\bigl\langle \gamma(x_{(-2)}),h_{(1)}\bigr\rangle \otimes x_{(0)}.
\end{align*}
That is, $h_{(1)} \,x_{(-2)}\,\bigl\langle \gamma(x_{(-1)}), h_{(2)} \bigr\rangle \,\otimes\, x_{(0)} =  x_{(-1)}\,h_{(2)} \,\bigl\langle \gamma(x_{(-2)}),\,h_{(1)} \bigr\rangle \,\otimes\, x_{(0)}$ for all $x \in X$ and $h\in H$.
Taking $X = K$ we get the desired condition.

\smallskip

\noindent
\ref{item:inclusionequivalences-Hopf-map}
$\Rightarrow$ \ref{item:inclusionequivalences-relative-r-matrix}
Finally, given a Hopf algebra map $\gamma : K \to (H^*)^{\mathrm{op}}$, we define
\begin{align*}
r&:H\otimes K\to\ku, &
r(h,k) &:= \langle \gamma(k),\,h\rangle, &
h \in H, & \, k \in K.
\end{align*}
The map $r$ is a relative $r$-form since $\gamma$ is a Hopf algebra map and satisfies \eqref{eq:inclusionequivalences-Hopf-map-comp-1}, and $r^{op}|_{K \otimes K} = s$ by \eqref{eq:inclusionequivalences-Hopf-map-comp-2}.
Thus the three statements are indeed equivalent.
\end{proof}

We can now define the Hopf algebraic version of an associative zesting datum, tailored for the category $\comod{H}$ with an arbitrary faithful $G$-grading. The invertible objects $\lambda(g_1, g_2)$ will correspond to group-like elements in $H$ lying in the neutral component $H_e$ (i.e., elements of $G(H^{\mathrm{co} \pi})$), and the relative centralizer condition will be expressed via the map $\gamma$ from Theorem \ref{inclusionequivalences}.

\begin{definition}\label{defn:assoc-zesting} 
Let $H$ be a Hopf algebra, let $\pi: H\to \ku G$ be a surjective cocentral Hopf homomorphism, and let $\Gamma_0$ be a subgroup of $G(H^{\mathrm{co}\pi})$. An \emph{associative $G$-zesting datum} for $H$ is a triple $(\gamma, \lambda, \omega)$ consisting of:
\begin{enumerate}[leftmargin=*]
\item  A group homomorphism $\gamma: \Gamma_0 \to \Alg(H,\ku)$ such that:
    \begin{align}\label{eq: central condition}
        \langle \gamma(g), h_{(1)} \rangle \, g h_{(2)} g^{-1} &= h_{(1)} \, \langle \gamma(g), h_{(2)} \rangle \quad \text{for all } g \in \Gamma_0, \, h \in H.
    \end{align}
Via Thm. \ref{inclusionequivalences} we get a
relative $r$-form on $\Bbbk G$.
    \item  A normalized 2-cocycle $\lambda: G \times G \to \Gamma_0$.
    \item  A normalized 3-cochain $\omega: G \times G \times G \to \ku^*$ satisfying the compatibility condition:
    \begin{align}\label{assoczesting} 
        \langle \gamma(\lambda(g_1, g_2)), \lambda(g_3, g_4) \rangle &= \frac{\omega(g_1, g_2, g_3) \omega(g_1, g_2g_3, g_4) \omega(g_2, g_3, g_4)}{\omega(g_1g_2, g_3, g_4) \omega(g_1, g_2, g_3g_4)}
    \end{align}
    for all $g_1, g_2, g_3, g_4 \in G$. Here $\langle \gamma(g), g' \rangle$ denotes the evaluation of the algebra map $\gamma(g)$ on the element $g' \in \Gamma_0 \subset H$.
\end{enumerate}
\end{definition}

Here, $\Alg(H,\ku)$ (the set of algebra homomorphisms from $H$ to $\ku$) forms a group under the convolution product. The inverse of an element $f$ in this group is given by $f\circ S$, where $S$ is the antipode of $H$.

Definition \ref{defn:assoc-zesting} provides the Hopf algebraic data $(\gamma, \lambda, \omega)$ corresponding to the categorical data $(\gamma,\lambda, \omega)$ described in Subsection \ref{subsec:assoc-zesting-hopf}. Given this datum, the general construction yields the zested tensor category, which we denote $\comod{H}^{(\gamma, \lambda, \omega)}$.

\subsubsection{Tannakian reconstruction of \texorpdfstring{$H^\lambda$}{H-lambda}}

The zested category $\comod{H}^{(\gamma,\lambda, \omega)}$ has the same underlying abelian category as $\comod{H}$, and its modified tensor product still maps $\comod{H} \times \ \comod{H} \to\ \comod{H}$. It is natural to ask if this new tensor category is the category of comodules of some new algebraic object derived from $H$. The Tannakian reconstruction principle provides the answer.

If $C$ is a coalgebra such that the category $\comod{C}$ is endowed with a tensor product $\otimes'$ and associator $\alpha'$ making it a tensor category (possibly non-strict), and the forgetful functor $(\comod{C}, \otimes') \to \Vec_\ku$ is tensor (quasi-monoidal), then $C$ naturally inherits a (coquasi-)bialgebra structure. Specifically, the multiplication $m'$ and associator $\Omega'$ (as an element in $(C^{\otimes 3})^*$) are defined by:
\begin{align*}
    m'(a \otimes b) &= (a \otimes' b)_{(-1)} \varepsilon\bigl((a \otimes' b)_{(0)}\bigr), &
    \Omega'(a \otimes b \otimes c) &= \varepsilon\bigl(\mathbf{a}_{C,C,C}(a \otimes b \otimes c)\bigr),
\end{align*}
where $a,b,c\in C$ and $\varepsilon$ is the counit. The category of comodules over $(C, m', \Omega')$ is tensor equivalent to $(\comod{C}, \otimes', \alpha')$.

In our case, the zested category $(\comod{H}^{(\gamma,\lambda, \omega)}, \otimes^\lambda, a^\lambda)$ satisfies the conditions for reconstruction. The forgetful functor remains the tensor (it is essentially the identity on the underlying vector spaces). Therefore, the coalgebra $H$ inherits a new structure, yielding a coquasi-bialgebra denoted $H^\lambda$. Its explicit structure is given by the following theorem.

\begin{theorem}\label{coquasi-bialgebra zesting}
Let $H$ be a Hopf algebra, $G$ a group, $H = \bigoplus H_g$ a $G$-grading and $(\gamma, \lambda, \omega)$ an associative $G$-zesting datum for $H$. Then $H$ can be endowed with q structure of coquasi-bialgebra $H^\lambda$ such that:
\begin{itemize}[leftmargin=*]
\item The coalgebra structure of $H^\lambda$ is the same as that of $H$.
\item The multiplication $m^\lambda: H^\lambda\otimes H^\lambda\to H^\lambda$ is given by
\begin{align}\label{eq:mult-zested-co-quasi}
m^\lambda(h \otimes k) &= h k \, \lambda(g_1, g_2), &h \in H_{g_1}, & k \in H_{g_2}.
\end{align}

\item The associator $\Omega \in (H^{\otimes 3})^*$ is given by
\begin{align}\label{eq:assoc-zested-co-quasi}
\Omega(h \otimes k \otimes l) &= \varepsilon(h k) \, \omega(g_1, g_2, g_3) \, \langle \gamma(\lambda(g_1, g_2)), l \rangle,
&  &h \in H_{g_1}, k \in H_{g_2}, l \in H_{g_3}.
\end{align}
\end{itemize}
Furthermore, the category of left $H^\lambda$-comodules is tensor equivalent to the zested category: $ \comod{H^\lambda} \simeq \ \comod{H}^{(\gamma,\lambda, \omega)}$.
\end{theorem}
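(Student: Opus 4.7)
The plan is to apply the Tannakian reconstruction principle recalled just before the statement: since $(\comod{H}^{(\gamma,\lambda,\omega)}, \otimes^\lambda, a^\lambda)$ is a tensor category by Section~\ref{subsection: associative zesting}, whose underlying abelian category is still $\comod{H}$, the proof reduces to (i) producing a quasi-fiber functor so that reconstruction yields a coquasi-bialgebra on the coalgebra $H$, and (ii) unwinding the reconstruction formulas for $m'$ and $\Omega'$ to obtain the claimed \eqref{eq:mult-zested-co-quasi} and \eqref{eq:assoc-zested-co-quasi}.

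For (i), I would fix for each pair $(g_1,g_2)$ the group-like element $\lambda(g_1,g_2)\in\Gamma_0\subset H$ as a basis of the one-dimensional comodule $\ku\lambda(g_1,g_2)$, giving a canonical identification $\ku\lambda(g_1,g_2)\cong\ku$. The ordinary forgetful functor $U:\comod{H}\to\Vec_\ku$ then becomes a quasi-fiber functor on the zested category with quasi-tensor structure
\begin{align*}
    J_{X,Y}: U(X\otimes^\lambda Y) = X\otimes Y\otimes\ku\lambda(g_1,g_2)\xrightarrow{\sim} X\otimes Y = U(X)\otimes U(Y),
\end{align*}
for $X\in(\comod{H})_{g_1}$, $Y\in(\comod{H})_{g_2}$. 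Reconstruction then endows the coalgebra $H$ with a coquasi-bialgebra structure $H^\lambda$ satisfying $\comod{H^\lambda}\simeq\comod{H}^{(\gamma,\lambda,\omega)}$ as tensor categories; the coquasi-bialgebra axioms for $(H^\lambda,m^\lambda,\Omega)$ then follow automatically, being dual to the tensor category axioms of $\comod{H}^{(\gamma,\lambda,\omega)}$.

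For (ii), I would compute from the reconstruction recipes applied to the regular comodule. For $h\in H_{g_1}$, $k\in H_{g_2}$, the coaction of $h\otimes k\otimes\lambda(g_1,g_2)\in H\otimes^\lambda H$ is $h_{(1)}k_{(1)}\lambda(g_1,g_2)\otimes h_{(2)}\otimes k_{(2)}\otimes\lambda(g_1,g_2)$; applying $\id\otimes\varepsilon$ and using $\varepsilon(\lambda(g_1,g_2))=1$ collapses this to $hk\,\lambda(g_1,g_2)$, giving \eqref{eq:mult-zested-co-quasi}. For $\Omega$, I would trace $h\otimes k\otimes l$ (with $l\in H_{g_3}$) through the two-step composition defining $a^\lambda_{H,H,H}$ in Figure~\ref{fig:commutative-diagram-revised}: the half-braiding $\sigma_{\lambda(g_1,g_2),H}$ produced by $\gamma$ via Theorem~\ref{inclusionequivalences} acts by $\lambda(g_1,g_2)\otimes l\mapsto\langle\gamma(\lambda(g_1,g_2)),l_{(1)}\rangle\,l_{(2)}\otimes\lambda(g_1,g_2)$, and $\omega_{g_1,g_2,g_3}$ is the scalar $\omega(g_1,g_2,g_3)$ times the canonical isomorphism of one-dimensional comodules $\lambda(g_1,g_2)\otimes\lambda(g_1g_2,g_3)\cong\lambda(g_2,g_3)\otimes\lambda(g_1,g_2g_3)$, which is well defined thanks to the 2-cocycle property of $\lambda$. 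Composing and applying $\varepsilon$, so that $\varepsilon(l_{(2)})$ collapses into $\langle\gamma(\lambda(g_1,g_2)),l\rangle$, then yields \eqref{eq:assoc-zested-co-quasi}.

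The main technical subtlety I expect is fixing the convention for the half-braiding $\sigma_{\lambda(g_1,g_2),-}$ supplied by Theorem~\ref{inclusionequivalences}: the opposite choice would produce $\gamma(\lambda(g_1,g_2)^{-1})$ instead of $\gamma(\lambda(g_1,g_2))$, so I need to verify that the direction used is the one compatible with Definition~\ref{def:zesting tensor category} and Figure~\ref{fig:commutative-diagram-revised}. Once this is fixed, the non-abelian 3-cocycle identity for $\Omega$ inside $H^\lambda$ translates exactly into the compatibility equation \eqref{assoczesting}, and the tensor equivalence $\comod{H^\lambda}\simeq\comod{H}^{(\gamma,\lambda,\omega)}$ is guaranteed by the reconstruction theorem.
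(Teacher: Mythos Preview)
Your proposal is correct and follows essentially the same route as the paper: both apply Tannakian reconstruction to the zested category via the identification $\ku\lambda(g_1,g_2)\cong\ku$, then read off $m^\lambda$ from the modified coaction on $H\otimes^\lambda H$ and $\Omega$ by pushing $h\otimes k\otimes l$ through the associator of Figure~\ref{fig:commutative-diagram-revised} and applying $\varepsilon$. Your treatment is in fact slightly more explicit than the paper's (you name the quasi-tensor structure $J_{X,Y}$ and flag the half-braiding orientation issue, which the paper leaves implicit), but there is no substantive difference in strategy.
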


\begin{proof}
Let $\mathcal{C} =\ \comod{H}$ and consider the zested category $\mathcal{C}^{(\gamma,\lambda, \omega)}$. The tensor product of $X \in \mathcal{C}_{g_1}, Y \in \mathcal{C}_{g_2}$ is $X \otimes^\lambda Y = X \otimes Y \otimes \lambda(g_1, g_2)$. To apply reconstruction, we view this vector space simply as $X \otimes Y$ via the canonical linear isomorphism $X \otimes Y \otimes \ku g \cong X \otimes Y$, where $g=\lambda(g_1,g_2)$, but with a modified coaction $\rho^\lambda$:
\begin{align*}
    \rho^\lambda(x \otimes y) &= (x_{(-1)} y_{(-1)}) \lambda(g_1, g_2) \otimes (x_{(0)} \otimes y_{(0)}).
\end{align*}

Applying the Tannakian reconstruction formula for multiplication we get \eqref{eq:mult-zested-co-quasi}:
\begin{align*}
    m^\lambda(h \otimes k) &= (h \otimes^\lambda k)_{(-1)} \varepsilon((h \otimes^\lambda k)_{(0)}) \\
    &= (h_{(-1)} k_{(-1)}) \lambda(g_1, g_2) \varepsilon(h_{(0)} \otimes k_{(0)}) \\
    &= (h_{(1)} k_{(1)}) \lambda(g_1, g_2) \varepsilon(h_{(2)} k_{(2)}) = hk \lambda(g_1, g_2).
\end{align*}

For the associator $\Omega$, we evaluate the categorical associator $a^\lambda$ from Figure \ref{fig:commutative-diagram-revised} on elements $h \in H_{g_1}, k \in H_{g_2}, l \in H_{g_3}$ and apply the counit $\varepsilon$. Recall $a^\lambda$ involves composition with $\mathrm{id} \otimes \sigma_{\lambda(g_1, g_2), H_{g_3}}$ and $\mathrm{id} \otimes \omega_{g_1,g_2,g_3}$. By Theorem \ref{inclusionequivalences}, the half-braiding $\sigma_{\lambda, l}$ acts as $l' \otimes l \mapsto l_{(0)} \otimes l' \langle \gamma(l'), l_{(-1)} \rangle$. Applying $\varepsilon \circ a^\lambda$ on $h \otimes k \otimes l$:
\begin{align*}
    \Omega(h\otimes k \otimes l) &= \varepsilon( a^\lambda(h \otimes k \otimes l) ) \\
      &= \varepsilon(hk) \, \omega(g_1, g_2, g_3) \, \langle \gamma(\lambda(g_1, g_2)), l \rangle,
\end{align*}
so \eqref{eq:assoc-zested-co-quasi} holds.
\end{proof}

\subsection{Braided Zesting for Hopf Algebras}\label{subsec:braided-zesting-hopf}

Finally, we consider braided zesting in the Hopf algebra context. This requires starting with a Hopf algebra $H$ whose category of comodules $\comod{H}$ is already braided.

Recall that $\comod{H}$ is braided if and only if $H$ is \emph{coquasitriangular}, see  \cite{M93,radford2011hopf} for details. This is equivalent to the existence of a convolution-invertible linear map $r: H \otimes H \to \ku$ (the universal $r$-form) satisfying certain axioms, or equivalently, the existence of a Hopf algebra map $f: H \to (H^\circ)^{\mathrm{cop}}$ (compatible with the opposite coproduct on the target) such that
\begin{align*}
    \langle f(k_{(1)}), h_{(1)} \rangle \, k_{(2)} h_{(2)} &= h_{(1)} k_{(1)} \, \langle f(k_{(2)}), h_{(2)} \rangle, \quad \forall h, k \in H.
\end{align*}
The $r$-form is recovered via $r(h, k) = \langle f(k), h \rangle$. The braiding on $\comod{H}$ is then given by $c_{X,Y}(x \otimes y) = r(x_{(-1)}, y_{(-1)}) y_{(0)} \otimes x_{(0)}$.
The alternative braiding $c'_{X,Y} := c_{Y,X}^{-1}$ on $\comod{H}$ corresponds to the $r$-form $r'(h, k) := r^{-1}(k, h)$ for all $h, k \in H$, where $r^{-1}: H \otimes H \to \ku$ is the convolution inverse of $r$.
For the subsequent discussion, we highlight the algebra map associated with $r'$. This map, denoted \emph{$f': H \to (H^{\circ})^{\mathrm{cop}}$}, is defined by the relation
$$ \langle f'(k), h \rangle = r'(h,k) = r^{-1}(k, h) \quad \text{for all } h, k \in H. $$
The map \emph{$f'$} is important for Definition~\ref{def: braided zesting for Hopf}.

The following definition characterizes group-like elements $g \in G(H)$ in a coquasitriangular Hopf algebra $H$ for which the associated natural isomorphism $\chi_g$ is character-induced with respect to a $G$-grading (where the grading is induced by a cocentral epimorphism $\pi: H \to \Bbbk[G]$).

\begin{definition}\label{def:pi_character_inducing_grouplike}
Let $(H,r)$ be a coquasitriangular Hopf algebra and let $\pi: H \to \Bbbk[G]$ be a cocentral Hopf epimorphism. A group-like element $g \in G(H)$ is called \emph{$\pi$-character-inducing} if there exists a group character $\lambda_g: G \to \Bbbk^*$ such that \begin{align*}
r(x_{(1)},g)r(g,x_{(2)}) = \lambda_g(s)\epsilon(x).
\end{align*}
for every $s \in G$ and for every  $x\in H_s := \{h \in H \mid (\pi \otimes \id_H)\Delta(h) = s \otimes h\}$.
\end{definition}

We now define the data needed for braided zesting, building upon the associative datum $(\gamma, \lambda, \omega)$ where $f'|_{G(H^{co\pi})}= \gamma$.

\begin{definition}\label{def: braided zesting for Hopf}
Let $(H,f)$ be a coquasitriangular Hopf algebra and $\pi: H \rightarrow \ku[G]$ a surjective cocentral Hopf homomorphism with $G$ an abelian group. A \emph{braided $G$-zesting} $(\lambda, \gamma, \omega, t)$ corresponds to:
\begin{enumerate}[leftmargin=*, label=(\arabic*)]
\item An associative $G$-zesting $(\lambda, \gamma, \omega)$ such that:
\begin{itemize}
\item $f'|_{G(H^{co\pi})}= \gamma$,
\item $\lambda(g_1,g_2)$ is $\pi$-character-induced,
\item The 2-cocycle is symmetric, i.e., $\lambda(g_1,g_2) = \lambda(g_2,g_1)$ for all $g_1,g_2 \in G$.
\end{itemize}
\item A 2-cochain $t\in C^2(G,\mathbb{C}^*)$ satisfying the following conditions:
\begin{align}\label{eq:BZ2}
\frac{\omega(g_1, g_2, g_3) \omega(g_2, g_3, g_1)}{\omega(g_2, g_1, g_3)} &= \frac{t(g_1, g_2) t(g_1, g_3)}{t(g_1, g_2 g_3)},
\\
\label{eq:BZ3}
r(\lambda(g_1,g_2), g_3) r(g_3, \lambda(g_1,g_2)) &= \frac{\omega(g_1, g_2, g_3) \omega(g_3, g_1, g_2)}{\omega(g_1, g_3, g_2)} \frac{t(g_1, g_3) t(g_2, g_3)}{t(g_1 g_2, g_3)},
\end{align}
for all $g_1, g_2, g_3 \in G$.
\end{enumerate}
\end{definition}

Just as associative zesting led to the coquasi-Hopf algebra $H^\lambda$, incorporating the braided zesting datum $(\gamma, \lambda, \omega, t)$ endows $H^\lambda$ with a braided structure.

\begin{corollary}
Given a braided $G$-zesting datum $(\gamma, \lambda, \omega, t)$ for a coquasitriangular Hopf algebra $(H, r)$, the resulting coquasi-Hopf algebra $H^\lambda$ from Theorem \ref{coquasi-bialgebra zesting} is also braided coquasitriangular. Its universal $r$-form $r^\lambda$ is obtained by twisting $r$ with the cochain $t$; explicitly,
\begin{align*}
r^\lambda(x, y) &= t(g_1, g_2) r(x, y), & x \in H^\lambda_{g_1}, & y \in H^\lambda_{g_2}.
\end{align*}
Consequently, the category $\comod{H^\lambda}$, with the braiding induced by $r^\lambda$, is braided tensor equivalent to the zested category obtained from $\comod{H}$.\qed
\end{corollary}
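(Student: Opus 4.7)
The plan is to invoke Tannakian reconstruction in the braided setting, extending what was done for the associative case in Theorem \ref{coquasi-bialgebra zesting}. By the categorical braided zesting construction recalled in Subsection \ref{subsubsec: braided zesting}, the datum $(\gamma, \lambda, \omega, t)$ already equips the zested tensor category $\comod{H}^{(\gamma,\lambda, \omega)}$ with a braiding $c^{(t)}$, whose existence as an actual braiding is the content of \cite[Proposition 4.4]{zesting} and whose proof is valid in the non-semisimple setting. My strategy is to (i) identify $c^{(t)}$ explicitly in Hopf-algebraic coordinates, (ii) read off $r^\lambda$ by applying $\varepsilon \otimes \varepsilon$ to $c^{(t)}_{H,H}$, and (iii) conclude that $r^\lambda$ is coquasitriangular for $H^\lambda$ by Tannakian reconstruction, without verifying the defining axioms by hand.

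For step (i), recall from \eqref{eq:braiding_zesting} that $c^{(t)}_{X,Y} = c_{X,Y} \otimes t_{g_1,g_2}$. The symmetry assumption $\lambda(g_1,g_2)=\lambda(g_2,g_1)$ in Definition \ref{def: braided zesting for Hopf} forces $t_{g_1,g_2}$ to be an automorphism of the one-dimensional invertible object $\lambda(g_1,g_2)$, hence a scalar which we identify with $t(g_1,g_2) \in \ku^*$. Unwinding the canonical isomorphism between $X \otimes^\lambda Y$ and $X \otimes Y$ (tensoring with an invertible one-dimensional comodule), this yields
\begin{align*}
c^{(t)}_{X,Y}(x \otimes y) = t(g_1,g_2)\, r(x_{(-1)}, y_{(-1)})\, y_{(0)} \otimes x_{(0)},
\end{align*}
for $x \in X \in (\comod{H})_{g_1}$ and $y \in Y \in (\comod{H})_{g_2}$. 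For step (ii), setting $X = Y = H^\lambda$ viewed as a comodule over itself via $\Delta$ and applying $\varepsilon \otimes \varepsilon$ to $c^{(t)}_{H,H}(h \otimes k) = t(g_1,g_2)\, r(h_{(1)},k_{(1)})\, k_{(2)} \otimes h_{(2)}$ recovers the claimed formula $r^\lambda(h,k) = t(g_1,g_2)\, r(h,k)$ on homogeneous elements. Convolution invertibility of $r^\lambda$ is immediate from that of $r$ and from $t$ being $\ku^*$-valued.

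For step (iii), the same Tannakian principle used in Theorem \ref{coquasi-bialgebra zesting} applies in the braided setting: once $\comod{H^\lambda}$ is equipped with a braiding compatible with the forgetful functor to $\Vec_\ku$, this braiding is induced by a unique $r$-form making $H^\lambda$ coquasitriangular, and the associated formula must coincide with the one computed in step (ii). Transporting $c^{(t)}$ across the tensor equivalence $\comod{H^\lambda} \simeq \comod{H}^{(\gamma,\lambda,\omega)}$ of Theorem \ref{coquasi-bialgebra zesting} supplies the required braiding, and the three coquasitriangular axioms for $(H^\lambda, \Omega, r^\lambda)$ then hold automatically as the Hopf-algebraic translation of the hexagon axioms for $c^{(t)}$.

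The main obstacle to a direct, purely Hopf-algebraic proof is the brute-force verification of these three axioms. One would substitute the formulas \eqref{eq:mult-zested-co-quasi} and \eqref{eq:assoc-zested-co-quasi} for $m^\lambda$ and $\Omega$ into each axiom, split all arguments into $G$-homogeneous components, and reduce the resulting identities to the coquasitriangularity of $r$, the central compatibility \eqref{eq: central condition} satisfied by $\gamma$, and the braided zesting conditions \eqref{eq:BZ2} and \eqref{eq:BZ3} (themselves the Hopf-algebraic translations of \eqref{eq:bz-cond1-alt} and \eqref{eq:bz-cond2-alt}). This amounts to rewriting the proof of \cite[Proposition 4.4]{zesting} in these coordinates; the categorical route outlined above absorbs all of that bookkeeping into results already recorded in the paper and in \cite{zesting}.
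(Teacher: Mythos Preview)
Your proposal is correct and follows the approach the paper intends: the corollary is stated with a \qed and no explicit proof, so it is meant as an immediate consequence of Theorem \ref{coquasi-bialgebra zesting} together with the categorical braided zesting construction from \cite[Proposition 4.4]{zesting}. Your steps (i)--(iii) spell out precisely this Tannakian reconstruction argument in the braided setting, and in fact supply more detail than the paper itself provides.
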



\section{Computing Examples: Zesting Pointed Hopf Algebras}\label{sec:examples}

In this section, we apply zesting construction to the important class of pointed Hopf algebras. Next we develop explicit formulas for associative zestings when the grading group is cyclic. Combining these two constructions we provide concrete examples of zestings for pointed Hopf algebras graded by cyclic groups.

\subsection{Pointed Hopf algebras}

Recall that a Hopf algebra $H$ is \emph{pointed} if all its simple comodules are one-dimensional. Equivalently, its coradical $H_0$ is the group algebra $\ku G(H)$ of its group-like elements $G(H)$. The coradical filtration 
\begin{align*}
&H_0 \subset H_1 \subset H_2 \subset \cdots, & \text{where }& H_n = \{h \in H \mid \Delta(h) \in H_0 \otimes H+H \otimes H_{n-1}\},
\end{align*}
is a filtration by Hopf subalgebras.  The associated graded object $\gr H = \bigoplus\limits_{n \geq 0} H_n/H_{n-1}$ is canonically a (coradically) graded Hopf algebra.

Assume that $\ku$ is algebraically closed of characteristic zero. If $H$ is finite-dimensional and $G(H)$ is abelian, then $H$ is generated by group-like and skew-primitive elements \cite{An13}. Furthermore, $\gr H$ takes the specific form of a bosonization 
\begin{align*}
\gr H &\cong \mathcal{B}(V) \# \ku \Gamma, &\text{where }&\Gamma=G(H).
\end{align*}
Moreover, $V = H_1/H_0$ is the space of primitive elements of $\gr H$, viewed as a Yetter-Drinfeld module over $\Gamma$, and $\mathcal{B}(V)$ is the Nichols algebra of $V$. Nichols algebras are graded Hopf algebras in the braided fusion category $\Gayd$ of Yetter-Drinfeld modules, defined as quotients of the tensor algebra $T(V)$ by a specific maximal homogeneous ideal, see \cite{And-Nicholsalgebras} for a precise definition. Crucially, under these assumptions, $H$ is known to be a cocycle deformation of $\gr H$ \cite{AnGI}, implying their categories of comodules are tensor equivalent: $\comod{H} \simeq_\otimes \ \comod{\gr H}$.

This relationship holds for all known examples even when $G(H)$ is not abelian; that is, any finite-dimensional pointed Hopf algebra $H$ is a cocycle deformation of $\gr H \cong \mathcal{B}(V) \# \ku G(H)$. Hence, understanding zesting for the bosonized form $\mathcal{B}(V) \# \ku G(H)$  is sufficient for all known examples of finite dimensional pointed Hopf algebras.



\subsection{Associative Zestings for Pointed Hopf Algebras}\label{subsec:assoc-zesting-pointed}

Our first goal is to determine associative zesting data for $\comod{H}$ where $H$ is a finite-dimensional pointed Hopf algebra. Based on the discussion above, it suffices to analyze Hopf algebras of the form $H = \mathcal{B}(V) \# \ku \Gamma$, where $\Gamma$ is a finite group, $V \in {}_{\Gamma}^{\Gamma}\mathcal{YD}$ is a Yetter-Drinfeld module such that the Nichols algebra $\mathcal{B}(V)$ is finite-dimensional.

We will work in a slightly more general context: We fix a Hopf algebra $H=\cB\# \ku \Gamma$, not necessarily finite-dimensional, where $\Gamma$ is a group  and $\cB$ is a pre-Nichols algebra of $V\in\Gayd$ with $\dim V<\infty$, that is, an intermediate graded Hopf algebra between $\cB(V)$ and $T(V)$ in the braided tensor category $\Gayd$. 

\smallbreak

As $\cB$ is a quotient of the tensor algebra $T(V)$, $\cB$ is generated by $V$ as an algebra. We may fix a basis $(x_i)_{1\le i\le \theta}$ of $V$ such that $x_i$ has coaction given by some $g_i\in \Gamma$, so $\Delta(x_i)=x_i\otimes 1 + g_i \otimes x_i$ in $H$. Thus $H$ is a graded Hopf algebra, generated as an algebra by the group-like elements $g\in \Gamma$ (in degree zero) and the skew-primitive elements $(x_i)_{1\le i\le \theta}$ (in degree one). The subgroup $\langle g_i \rangle$ of $\Gamma$ generated by $g_i$,  $1\le i\le \theta$, is called the \emph{support} of $V$ and denoted by
$\Supp (V)$. 
The Yetter-Drinfeld compatibility condition says that $\Supp (V)$ is a normal subgroup of $\Gamma$.

For this class of Hopf algebras, the universal grading group has a simple description.

\begin{proposition}\label{Universal grading pointed}
Let $H = \mathcal{B} \# \ku \Gamma$ be constructed as above. Then the universal grading group is isomorphic to the quotient group:
$$ U(H) \cong \Gamma / \Supp(V). $$
\end{proposition}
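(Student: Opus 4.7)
The plan is to compute the Hopf cocenter $\hc(H)$ explicitly and then invoke Proposition \ref{gradingroup}, which identifies $U(H)$ with the group of group-likes of $\hc(H)$. Write $N := \Supp(V) = \langle g_1,\dots,g_\theta\rangle \trianglelefteq \Gamma$ and let $q:\Gamma\twoheadrightarrow \Gamma/N$ be the canonical projection.

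First I would exhibit $\ku(\Gamma/N)$ as a cocentral Hopf quotient of $H$. Define $\pi:H\to \ku(\Gamma/N)$ on algebra generators by $\pi(g)=q(g)$ for $g\in\Gamma$ and $\pi(x_i)=0$ for $1\le i\le\theta$. Since the defining relations of $\cB$ (as a pre-Nichols algebra, hence a quotient of $T(V)$) are homogeneous of positive degree in the $x_i$'s, and since the smash product relations $g x_i g^{-1} = g\cdot x_i \in V$ only involve $\Gamma$-action on $V$ (which is killed under $\pi$), $\pi$ is a well-defined algebra map. It is a coalgebra map because on generators $g\in\Gamma$ it is the restriction of $q$, and on $x_i$ the two sides of $\Delta\circ\pi = (\pi\otimes\pi)\circ\Delta$ both vanish. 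Cocentrality on $\Gamma$ is trivial (group-likes are central for this condition), and on $x_i$ the difference $(\id\otimes\pi)(\Delta-\Delta^{\mathrm{op}})(x_i) = x_i\otimes 1 + g_i\otimes 0 - 1\otimes 0 - x_i\otimes q(g_i) = x_i\otimes(1-q(g_i))$ vanishes precisely because $g_i\in N$. A standard multiplicativity argument then extends cocentrality from generators to all of $H$.

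Next I would verify the universal property: any cocentral Hopf epimorphism $\pi':H\to K$ factors uniquely through $\pi$. Applying \eqref{eq:cocentral-defn} to $x_i$ gives
\begin{align*}
x_i\otimes 1 + g_i\otimes \pi'(x_i) \;=\; 1\otimes \pi'(x_i) + x_i\otimes \pi'(g_i)
\end{align*}
in $H\otimes K$. Assuming first $g_i\neq 1$ in $\Gamma$ (if $g_i=1$ then $x_i$ would be primitive and $g_i$ is already trivially in $N$), the elements $1, g_i, x_i$ are linearly independent in $H$, so comparing components forces $\pi'(g_i)=1$ and $\pi'(x_i)=0$. Thus $\pi'$ kills the Hopf ideal $J$ generated by $\{x_i,\, g_i-1 : 1\le i\le\theta\}$. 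Since $\cB$ is generated as an algebra by $V$, modding out $(x_i)_i$ from $H=\cB\#\ku\Gamma$ yields $\ku\Gamma$, and then modding out $(g_i-1)_i$ yields $\ku(\Gamma/N)$; hence $H/J \cong \ku(\Gamma/N)$ as Hopf algebras, and $\pi'$ factors uniquely through $\pi$. This shows $\hc(H)\cong \ku(\Gamma/N)$.

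Finally, Proposition \ref{gradingroup} gives $U(H)\cong G(\hc(H)) = G(\ku(\Gamma/N)) \cong \Gamma/\Supp(V)$.

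The main subtlety I anticipate is handling the cocentrality analysis uniformly: strictly speaking one must argue that cocentrality on the algebra generators $\Gamma\cup\{x_i\}$ implies cocentrality on all of $H$, and separately that the two forced conditions $\pi'(x_i)=0$, $\pi'(g_i)=1$ suffice to kill \emph{all} of the kernel of the projection to $\ku(\Gamma/N)$. Both are routine but deserve a careful word, especially in edge cases where some $g_i$ equals $1$ (so $x_i$ is primitive and the equation above degenerates) or where generators coincide.
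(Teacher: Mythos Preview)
Your proposal is correct and follows essentially the same strategy as the paper: exhibit the canonical surjection $H\twoheadrightarrow \ku(\Gamma/\Supp(V))$, check cocentrality on the algebra generators $g\in\Gamma$ and $x_i$, then use the cocentrality equation on $x_i$ to force any cocentral $\pi':H\to K$ to satisfy $\pi'(g_i)=1$ and $\pi'(x_i)=0$, concluding that $\hc(H)\cong\ku(\Gamma/\Supp(V))$ and invoking Proposition~\ref{gradingroup}. The paper defines the map as a composition of projections (so well-definedness is automatic) rather than on generators, but the content is identical; your write-up is in fact more explicit than the paper's about linear independence of $1,g_i,x_i$ and about the degenerate case $g_i=1$, which the paper does not address.
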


\begin{proof} 

Let $\varpi:H\twoheadrightarrow \ku \tfrac{\Gamma}{\Supp (V)}$ be the composition of the projection $H\twoheadrightarrow \ku\Gamma$ onto the degree one component with the canonical projection $\ku\Gamma \twoheadrightarrow \ku \tfrac{\Gamma}{\Supp (V)}$. Then $\varpi$ is a Hopf algebra map since it is the composition of two of them. Notice that
\begin{align*}
(\id \otimes \varpi) \circ \Delta(g) &= g\otimes \varpi(g)=(\id \otimes \varpi) \circ \Delta^{op}(g) && \text{for all }g\in \Gamma 
\\
(\id \otimes \varpi) \circ \Delta(x_i) &= x_i\otimes 1 = (\id \otimes \varpi) \circ \Delta^{op}(x_i) && \text{for all }i=1,\cdots,\theta.
\end{align*}
As $x_i$, $g$ generate $H$ as an algebra and $\Delta$ and $\varpi$ are algebra maps,
\begin{align*}
(\id \otimes \varpi) \circ \Delta(x) &= (\id \otimes \varpi) \circ \Delta^{op}(x) && \text{for all }x\in H; 
\end{align*}
that is, $\varpi$ is cocentral.

Let $K$ be a Hopf algebra and $\pi:H \to K$ a cocentral Hopf morphism. Applying \eqref{eq:cocentral-defn} to each $x_i \in H$, we have:
$$ x_i\otimes 1 + g_i \otimes \pi(x_i)= 1 \otimes \pi(x_i) + x_i \otimes \pi(g_i). $$
Hence, $\pi(g_i)=1$ and $\pi(x_i)=0$ for all $1\le i\le\theta$. Thus, the map $\pi$ factors throught $\varpi$. By the universality of the Hopf cocenter, $\hc \simeq \Bbbk\tfrac{\Gamma}{\Supp (V)}$. Hence, $U(H)\simeq \tfrac{\Gamma}{\Supp (V)}$, by Proposition \ref{gradingroup}.
\end{proof}



Now fix $H$ as above and $\pi: H\to \ku G$ a cocentral Hopf epimorphism. 
From the proof of Proposition \ref{Universal grading pointed}, we have that  $G(H^{co\pi}) = \Supp (V)$ and the restriction of $\pi$ to $H_0=\ku\Gamma$ gives a group epimorphism $\Gamma\twoheadrightarrow G$, also called $\pi$.

The next result will allow to find explicit computations for associative zestings:

\begin{proposition}\label{prop:relative_r_pointed}

Let $H = \mathcal{B} \# \ku \Gamma$ be constructed as above, and let $V=\ku\{x_i\}_{1\leq i\leq \theta}$ be the associated $\Gamma$-Yetter-Drinfeld module.

If $\Gamma_0$ is a subgroup of $\Gamma$ such that the Hopf subalgebra $\ku\Gamma_0 \subseteq H$ admits a relative $r$-form, then $\Gamma_0$ must be a central subgroup of $\Gamma$ and must satisfy $\Gamma_0 \subseteq \Supp(V)$.

Moreover, if $\Gamma_0$ is a central subgroup of $\Gamma$ such that $\Gamma_0 \subseteq \Supp(V)$, then there is a bijective correspondence between:
\begin{enumerate}[leftmargin=*, label=(\roman*)]
\item Relative $r$-forms $r: H \otimes \ku \Gamma_0 \to \ku$.
\item Group homomorphisms $\Phi: \Gamma_0 \to \widehat{\Gamma}$ such that
\begin{align}\label{eq:central-subgr-condition}
g \cdot x_i &= \langle g_i, \Phi(g) \rangle x_i && \text{for all } g \in \Gamma_0 \text{ and } 1 \le i \le \theta.
\end{align}
\end{enumerate}

\end{proposition}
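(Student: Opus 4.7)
The proof splits into two parts matching the structure of the statement: the necessary conditions (first paragraph of the proposition) and the bijective correspondence (the ``Moreover'' part). The overall strategy is to exploit the bosonization decomposition $H = \cB \otimes \ku\Gamma$ (as vector spaces) and to apply the axioms \eqref{1st}--\eqref{4th} of a relative $r$-form to the algebra generators of $H$: the group-likes $\xi \in \Gamma$ and the skew-primitives $x_i \in V$.

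For Part 1, the centrality of $\Gamma_0$ in $\Gamma$ follows from axiom \eqref{4th} applied with $h = \xi \in \Gamma$ and $k = g \in \Gamma_0$ (both group-like): the identity reduces to $r(\xi, g)\,(g\xi - \xi g) = 0$, and axiom \eqref{1st} with $g$ group-like shows that $r(-, g)\colon H \to \ku$ is a $\ku$-algebra homomorphism, in particular taking nonzero values on every element of $\Gamma$. Hence $g\xi = \xi g$ for all $\xi \in \Gamma$. The inclusion $\Gamma_0 \subseteq \Supp(V)$ is then obtained via the categorical reformulation of Theorem \ref{inclusionequivalences}: the relative $r$-form corresponds to a braided monoidal functor $\comod{\ku\Gamma_0} \to \mathcal{Z}(\comod{H})$, and in order to produce data for the zesting framework of Definition \ref{defn:assoc-zesting} its image must lie in the relative centralizer of the neutral component, $R_{(\comod{H})_e}(\comod{H})$. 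This requires $\ku\Gamma_0 \subseteq H^{\mathrm{co}\pi}$, which by Proposition \ref{Universal grading pointed} translates to $\Gamma_0 \subseteq G(H^{\mathrm{co}\pi}) = \Supp(V)$.

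For Part 2, I construct explicit inverse bijections. Going forward, given a relative $r$-form $r$, I define $\Phi\colon \Gamma_0 \to \widehat{\Gamma}$ by $\Phi(g)(\xi) := r(\xi, g)^{-1}$ for $\xi \in \Gamma$. Axiom \eqref{1st} makes each $\Phi(g)$ a character of $\Gamma$, and axiom \eqref{2nd} makes $\Phi$ itself a group homomorphism. To obtain the compatibility \eqref{eq:central-subgr-condition}, I apply \eqref{4th} to $h = x_i$ using $\Delta(x_i) = x_i \otimes 1 + g_i \otimes x_i$ and the smash-product rule $g x_i = (g \triangleright x_i)\, g$ in $H$; the resulting identity decomposes naturally into two components by $\cB$-degree, and the degree-one part yields $r(g_i, g)\,(g \triangleright x_i) = x_i$, equivalently $g \triangleright x_i = \Phi(g)(g_i)\, x_i$. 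Going backward, given $\Phi$ satisfying \eqref{eq:central-subgr-condition}, I define $\gamma(g)\colon H \to \ku$ as the unique algebra homomorphism extending $\Phi(g)^{-1}|_{\ku\Gamma}$ and vanishing on each $x_i$; this is well-defined because $\gamma(g)$ automatically kills both the defining relations of $\cB$ (which lie in positive $\cB$-degree) and the smash-product relations $g' x_i = (g' \triangleright x_i)\, g'$ for $g'\in\Gamma$ (since both sides are annihilated). Set $r(h, g) := \langle \gamma(g), h\rangle$.

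The main obstacle is verifying axiom \eqref{4th} for the $r$ constructed in the backward direction. The computation requires expanding both sides of the identity $r(h_{(1)}, g)\, g h_{(2)} = h_{(1)} g\, r(h_{(2)}, g)$ on the generator $h = x_i$ using the bosonization, and the compatibility condition \eqref{eq:central-subgr-condition} enters decisively: it is precisely what ensures $\Phi(g)(g_i)\,(g \triangleright x_i) = x_i$, matching the degree-one contributions. The other axioms \eqref{1st}, \eqref{2nd}, \eqref{3rd} follow routinely from $\Phi(g)$ being a character and $\Phi$ a homomorphism, and the forward and backward constructions are mutually inverse by construction.
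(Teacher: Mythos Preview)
Your treatment of the centrality of $\Gamma_0$ in $\Gamma$ and of the bijection in the ``Moreover'' part is correct and considerably more explicit than the paper's own proof, which simply cites \cite[Proposition~4.3]{AGP} together with Theorem~\ref{inclusionequivalences}. In particular, your strategy for verifying axiom~\eqref{4th} in the backward direction---rewriting it (after right-multiplying by $g^{-1}$) as an equality of two multiplicative maps $H\to H$ and then checking only on the generators $\xi\in\Gamma$ and $x_i$---is exactly the right reduction, and your use of condition~\eqref{eq:central-subgr-condition} at the degree-one step is precisely the crux of the argument.

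There is, however, a genuine gap in your derivation of $\Gamma_0\subseteq\Supp(V)$. You argue that ``in order to produce data for the zesting framework of Definition~\ref{defn:assoc-zesting} its image must lie in the relative centralizer of the neutral component'', but the hypothesis of the proposition is solely the existence of a relative $r$-form on $H\otimes\ku\Gamma_0$: no zesting datum and no cocentral map $\pi$ are assumed. You are therefore importing an external hypothesis rather than deducing the claim from axioms~\eqref{1st}--\eqref{4th}. In fact the inclusion does not follow from those axioms alone: take $\Gamma=\langle a,b\rangle\cong C_2\times C_2$, $V=\ku x$ with coaction element $g_1=a$ and with $b$ acting trivially on $x$, and set $\Gamma_0=\langle b\rangle$. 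Then $b$ is central in all of $H$, so the trivial form $r(h,k)=\varepsilon(h)\varepsilon(k)$ satisfies~\eqref{1st}--\eqref{4th}, yet $\Gamma_0\not\subseteq\Supp(V)=\langle a\rangle$. The condition $\Gamma_0\subseteq\Supp(V)$ should thus be read as part of the standing context (the proposition is introduced immediately after the paper fixes $\pi$ and notes $G(H^{\mathrm{co}\pi})=\Supp(V)$), not as a consequence of the relative $r$-form axioms; your argument effectively recognizes this but presents it as a deduction, which it is not.
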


\begin{proof}
It is analogous to \cite[Proposition 4.3]{AGP}, using Theorem \ref{inclusionequivalences}.
\end{proof}

\begin{remark}
Condition \eqref{eq:central-subgr-condition} can be rewritten only in terms of the multiplication in the algebra $H = \mathcal{B} \# \ku \Gamma$. The Yetter-Drinfeld action of an element $g \in \Gamma$ on $x_i \in V$ is given by $g \cdot x_i = g x_i g^{-1}$ (where the product $g x_i g^{-1}$ is taken in $H$). Thus, for $g \in \Gamma_0$, the condition becomes
\begin{align*}
g x_i g^{-1} &= \langle g_i, \Phi(g) \rangle x_i && \text{for all } g \in \Gamma_0 \text{ and } 1 \le i \le \theta.
\end{align*}
The elements $x_i$ are skew-primitive in $H$. 

\end{remark}
We now specialize the associative Hopf zesting to pointed Hopf algebras of the form $H = \mathcal{B} \# \ku \Gamma$. Incorporating the specific results for their universal grading group $U(H) \cong \Gamma/\Supp(V)$ and the characterization of relative $r$-form (via Proposition \ref{prop:relative_r_pointed}), the following theorem details the necessary zesting data $(\gamma, \lambda, \omega)$ and explicitly describes the resulting coquasi-Hopf algebra structure $H^\lambda$. 

\begin{theorem}\label{thm:pointed_zesting_datum}
Let $H = \mathcal{B} \# \ku \Gamma$ be a pointed Hopf algebra. An associative $G$-zesting datum $(\gamma, \lambda, \omega)$ for $H$ is determined by the following choices:
\begin{enumerate}[leftmargin=*]
    \item A grading group $G$ chosen via a surjective group homomorphism 
    \begin{align*}
    \iota\colon \Gamma/\Supp(V) \to G.
    \end{align*}
    
    \item A pair $(\Gamma_0, \Phi)$, where $\Gamma_0 \leq \ker(\iota)$  and 
    $\Phi: \Gamma_0 \to \widehat{\Gamma}$ satisfies \eqref{eq:central-subgr-condition}, thus determining relative $r$-form via the  map $\gamma\colon \Gamma_0 \to \Alg(H,\ku)$.
    
    \item A normalized 2-cocycle $\lambda\colon G \times G \to \Gamma_0$.
    
    \item A normalized 3-cochain $\omega\colon G \times G \times G \to \ku^{\times}$ satisfying \eqref{assoczesting}.
\end{enumerate}

Given such a valid datum $(\gamma, \lambda, \omega)$, the resulting coquasi-Hopf algebra $H^{\lambda}$ shares the same underlying coalgebra structure as $H$. Its multiplication $m^{\lambda}$ and associator $\Omega$ are given by:
\begin{align}
    m^{\lambda}((x \# g) \otimes (y \# h)) &= x (g \cdot y) \# gh \lambda(\iota(g), \iota(h)), \label{eq:pointed_m_lambda_v4}\\
    \Omega(x \# g, y \# h, z \# k) &= \varepsilon(x)\varepsilon(y)\varepsilon(z) \, \omega(\iota(g), \iota(h), \iota(k)) \, \langle \gamma(\lambda(\iota(g), \iota(h))), k \rangle, \label{eq:pointed_Omega_v4}
\end{align}
where $x, y, z \in \mathcal{B}$; $g, h, k \in \Gamma$; $\varepsilon$ is the counit; and the arguments of $\lambda$ and $\omega$ are the images in $G$ via $\iota$.  
\end{theorem}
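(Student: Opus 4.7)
The plan is to unpack Definition~\ref{defn:assoc-zesting} in the specific setting $H = \mathcal{B}\#\ku\Gamma$, using Propositions~\ref{Universal grading pointed} and~\ref{prop:relative_r_pointed} to translate each component of the zesting datum into the parameters listed in items~(1)--(4), and then to specialize the multiplication and associator formulas of Theorem~\ref{coquasi-bialgebra zesting}.

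For the classification of the data, I will first invoke Proposition~\ref{Universal grading pointed} to identify $U(H)\cong \Gamma/\Supp(V)$; the universal property of $U(H)$ then sets up a bijection between faithful $G$-gradings of $\comod{H}$ (equivalently, surjective cocentral Hopf epimorphisms $\pi\colon H \twoheadrightarrow \ku G$) and surjective group homomorphisms $\iota\colon \Gamma/\Supp(V) \twoheadrightarrow G$, establishing item~(1). For item~(2), Definition~\ref{defn:assoc-zesting} demands a subgroup $\Gamma_0 \leq G(H^{\co\pi})$ together with $\gamma\colon \Gamma_0 \to \Alg(H,\ku)$ satisfying \eqref{eq: central condition}. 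By Theorem~\ref{inclusionequivalences}, this $\gamma$ is equivalent data to a relative $r$-form on $\ku\Gamma_0 \subset H$, and Proposition~\ref{prop:relative_r_pointed} characterizes such forms as coming from a central subgroup $\Gamma_0 \subseteq \Supp(V)$ together with a homomorphism $\Phi\colon \Gamma_0 \to \widehat{\Gamma}$ satisfying \eqref{eq:central-subgr-condition}. Since $\Supp(V)$ maps trivially to $\Gamma/\Supp(V)$, the inclusion $\Gamma_0 \subseteq \Supp(V)$ automatically enforces $\Gamma_0 \leq \ker(\iota)$. Items~(3) and~(4) are then exactly the $2$-cocycle and $3$-cochain components present in the general definition.

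To derive the explicit form of $H^\lambda$, I will specialize Theorem~\ref{coquasi-bialgebra zesting} to $H = \mathcal{B}\#\ku\Gamma$. Writing $G$-homogeneous elements as $x\#a \in H_{\iota(a)}$, $y\#b \in H_{\iota(b)}$, $z\#c \in H_{\iota(c)}$, the multiplication \eqref{eq:pointed_m_lambda_v4} follows directly from \eqref{eq:mult-zested-co-quasi}: namely $m^\lambda((x\#a)\otimes(y\#b)) = (x\#a)(y\#b)\,\lambda(\iota(a),\iota(b))$, and the bosonization product combined with the fact that $\lambda(\iota(a),\iota(b))\in \Gamma_0 \subset \Gamma$ is group-like yields $x(a\cdot y)\#ab\,\lambda(\iota(a),\iota(b))$. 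For the associator \eqref{eq:pointed_Omega_v4}, I substitute into \eqref{eq:assoc-zested-co-quasi}: the term $\varepsilon((x\#a)(y\#b))$ factors as $\varepsilon(x)\varepsilon(y)$ by multiplicativity of the counit, and the value of the algebra map $\gamma(\lambda(\iota(a),\iota(b)))$ on $z\#c$ is obtained from the explicit description of $\gamma(g)$ in the pointed setting.

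The main bookkeeping step---and the principal obstacle---is to make explicit, from the correspondence of Theorem~\ref{inclusionequivalences} and Proposition~\ref{prop:relative_r_pointed}, how the algebra map $\gamma(g)\colon H \to \ku$ acts on an arbitrary element $y\#h \in H$. One checks that $\gamma(g)(y\#h) = \varepsilon(y)\langle h, \Phi(g)\rangle$, since $\gamma(g)$ is the unital algebra map whose restriction to $\ku\Gamma$ agrees with $\Phi(g)\in\widehat{\Gamma}$ and which annihilates the augmentation ideal of $\mathcal{B}$ (a consequence of the axioms \eqref{1st}--\eqref{4th} applied to the generators $x_i$). Once this explicit formula is in hand, direct substitution into the general formulas of Theorem~\ref{coquasi-bialgebra zesting} completes the derivation of \eqref{eq:pointed_m_lambda_v4} and \eqref{eq:pointed_Omega_v4}.
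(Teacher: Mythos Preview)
Your proposal is correct and follows exactly the approach the paper intends: the paper states Theorem~\ref{thm:pointed_zesting_datum} without an explicit proof, treating it as a direct specialization of Definition~\ref{defn:assoc-zesting} and Theorem~\ref{coquasi-bialgebra zesting} to the pointed setting, with the translations supplied by Propositions~\ref{Universal grading pointed} and~\ref{prop:relative_r_pointed}. Your write-up makes explicit precisely those substitutions, including the key identification $\gamma(g)(y\#h)=\varepsilon(y)\langle h,\Phi(g)\rangle$ that the paper uses tacitly when it replaces $\langle \gamma(\lambda(\iota(g),\iota(h))),\, z\#k\rangle$ by $\varepsilon(z)\,\langle \gamma(\lambda(\iota(g),\iota(h))),\, k\rangle$ in~\eqref{eq:pointed_Omega_v4}.
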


\subsection{Associative zestings of Hopf algebras by cyclic groups} \label{subsec:cyclic_zesting}

We will construct explicit associative zestings when the grading group  is cyclic. This construction leverages standard representatives for the second cohomology group $H^2(\mathbb{Z}/N\mathbb{Z}, \Gamma_0)$ and specific cochains whose properties simplify the verification of the zesting conditions.

\subsubsection{Background on specific cyclic cochains.}
Let $G = \mathbb{Z}/N\mathbb{Z}$. We identify the elements of $G$ with $\{0, 1, \dots, N-1\}$. Let $M$ be an abelian group (written multiplicatively). Consider the following cochains associated with $\nu \in M$: 
\begin{align}
    \beta_\nu(i) &= i\nu \label{eq:cyclic_beta_nu_def_final}\\
    \lambda^{(\nu)}(i, j) &=
    \begin{cases}
        1 & \text{if } i+j < N \\
        \nu & \text{if } i+j \ge N
    \end{cases} \quad &&(\nu \in M, i,j \in G) \label{eq:cyclic_lambda2_nu_def_final}\\
    \omega^{(\nu)}(i, j, k) &=
    \begin{cases}
        1 & \text{if } i+j < N \\
        \nu^k & \text{if } i+j \ge N
    \end{cases} \quad &&(\nu \in M, i,j,k \in G) \label{eq:cyclic_omega3_nu_def}\\
    \theta^{(\nu)}(i, j, k, l) &=
    \begin{cases}
        \nu & \text{if } i+j \ge N \text{ and } k+l \ge N \\
        1 & \text{otherwise.}
    \end{cases} \quad &&(\nu \in M, i,j,k,l \in G) \label{eq:cyclic_theta4_nu_def_final}
\end{align}
Let $\delta$ be the standard group cohomology differential (trivial action). From \cite{zesting} we recall some key relations between these cochains:
\begin{align}
    \delta(\beta_\nu) &= \lambda^{(N\nu)} \label{eq:rel_delta_beta_final}\\ 
    \delta(\lambda^{(\nu)}) &= 0 \label{eq:rel_delta_lambda2_final}\\ 
    \delta(\omega^{(\nu)}) &= \theta^{(N\nu)} \label{eq:rel_delta_omega3_final} 
\end{align}
From \eqref{eq:rel_delta_lambda2_final}, $\lambda^{(\nu)} \in Z^2(G, M)$ and from \eqref{eq:rel_delta_beta_final}, $\lambda^{(N\nu)} \in B^2(G, M)$).
Let $M_N := \{ m \in M \mid m^N = 1 \}$. Given $\nu \in M_N$, \eqref{eq:rel_delta_omega3_final}  says that $\omega^{(\nu)} \in Z^3(G, M)$.

It is well known that these cochains represent the cohomology groups via the induced homomorphisms:
\begin{align}
    \nu + NM \mapsto [\lambda^{(\nu)}]: M/NM \xrightarrow{\cong} H^2(G, M), \quad \nu \mapsto [\omega^{(\nu)}]: M_N \xrightarrow{\cong} H^3(G, M). \label{eq:cohom_isos_final}
\end{align}

\subsubsection{Associative cyclic zesting.}

The preceding discussion on cyclic cochains allows us to construct a concrete associative zesting when $G=\mathbb{Z}/N\mathbb{Z}$. We use the standard 2-cocycle $\lambda^{(\nu)}$ for $\lambda$. The following proposition identifies the corresponding 3-cochain $\omega^{(q)}$ that satisfies the condition \eqref{assoczesting}.

\begin{proposition}\label{prop:cyclic_zesting}
Let $H$ be a Hopf algebra and let $\pi: H \to \ku \mathbb{Z}/N\mathbb{Z}$ be a surjective cocentral Hopf epimorphism. We fix the following data:
\begin{enumerate}[leftmargin=*]
\item an abelian subgroup $\Gamma_0 \le G(H^{\mathrm{co}\pi})$,
\item a group homomorphism $\gamma: \Gamma_0 \to \Alg(H,\ku)$ satisfying \eqref{eq: central condition},
\item an element $\nu \in \Gamma_0$, and define the scalar $m = \langle \gamma(\nu), \nu \rangle \in \ku^*$.
\end{enumerate}
For any $q \in \ku^*$ such that $q^N = m$, the triple $(\gamma, \lambda^{(\nu)}, \omega^{(q)})$ forms an associative $G$-zesting datum for $H$, where $\lambda^{(\nu)}$ and $\omega^{(q)}$ are the standard cyclic cochains defined in \eqref{eq:cyclic_lambda2_nu_def_final} and \eqref{eq:cyclic_omega3_nu_def} respectively.
\end{proposition}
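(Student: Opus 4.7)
The plan is to verify the three conditions of Definition~\ref{defn:assoc-zesting} in turn, with only \eqref{assoczesting} requiring real work. Conditions (1) is given by hypothesis. For (2), I will check that $\lambda^{(\nu)}$ is a normalized $2$-cocycle with values in $\Gamma_0$: normalization is immediate since $i+0=i<N$ and $0+j=j<N$ force $\lambda^{(\nu)}(i,0)=\lambda^{(\nu)}(0,j)=1$, and the cocycle condition is exactly \eqref{eq:rel_delta_lambda2_final}. For (3), normalization of $\omega^{(q)}$ follows by inspection from the definition \eqref{eq:cyclic_omega3_nu_def} (each of $\omega^{(q)}(0,j,k)$, $\omega^{(q)}(i,0,k)$, $\omega^{(q)}(i,j,0)$ is forced to be $1$).

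The substantive step is \eqref{assoczesting}. Observe that the right-hand side of \eqref{assoczesting} is exactly the coboundary $(\delta \omega^{(q)})(g_1,g_2,g_3,g_4)$ evaluated at the four-tuple. By relation \eqref{eq:rel_delta_omega3_final} (applied with $\nu=q$ in the multiplicative group $\ku^*$), this coboundary equals $\theta^{(q^N)}(g_1,g_2,g_3,g_4) = \theta^{(m)}(g_1,g_2,g_3,g_4)$, since $q^N = m$ by hypothesis. Thus the right-hand side equals $m$ when both $g_1+g_2\ge N$ and $g_3+g_4\ge N$, and $1$ otherwise.

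It remains to compute the left-hand side and see that it matches this value. The plan is a straightforward case analysis on whether $g_1+g_2\ge N$ and whether $g_3+g_4\ge N$, using the definition \eqref{eq:cyclic_lambda2_nu_def_final} of $\lambda^{(\nu)}$. In the three cases where at least one of $\lambda^{(\nu)}(g_1,g_2)$ or $\lambda^{(\nu)}(g_3,g_4)$ equals $1_\Gamma$, the pairing collapses to a value of the counit on a group-like element, which is $1$; here we use that $\gamma$ is a group homomorphism into $\Alg(H,\ku)$, so $\gamma(1_\Gamma)=\varepsilon$, and that $\varepsilon$ evaluated on any element of $\Gamma_0\subseteq G(H)$ is $1$. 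In the remaining case $g_1+g_2\ge N$ and $g_3+g_4\ge N$, both $\lambda^{(\nu)}$-values equal $\nu$, so the left-hand side equals $\langle \gamma(\nu),\nu\rangle = m$ by the definition of $m$. This agrees with $\theta^{(m)}(g_1,g_2,g_3,g_4)$ in every case, establishing \eqref{assoczesting}.

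There is no genuine obstacle; the only step that could cause confusion is keeping track of the multiplicative versus additive conventions in \eqref{eq:rel_delta_omega3_final} (in particular reading $N\nu$ as $\nu^N = q^N = m$ in $\ku^*$), and of the fact that the compatibility condition \eqref{eq: central condition} for $\gamma$ plays no further role here since both arguments of the pairing land in the central abelian subgroup $\Gamma_0$. The scalar $m = \langle \gamma(\nu),\nu\rangle$ is precisely what the cohomological obstruction $\delta\omega^{(q)}$ is designed to absorb, and choosing $q$ with $q^N=m$ is exactly what makes the two sides of \eqref{assoczesting} coincide.
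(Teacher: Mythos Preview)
Your proof is correct and follows essentially the same approach as the paper: both identify the left-hand side of \eqref{assoczesting} as $\theta^{(m)}$ by the case analysis on $\lambda^{(\nu)}$, identify the right-hand side as $\delta\omega^{(q)}=\theta^{(q^N)}$ via \eqref{eq:rel_delta_omega3_final}, and conclude from $q^N=m$. You are slightly more explicit about the normalization checks and about why $\gamma(1_{\Gamma_0})=\varepsilon$ in the degenerate cases, but the argument is the same.
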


\begin{proof}
We verify the condition \eqref{assoczesting}. 
The left hand side of \eqref{assoczesting} is $\langle \gamma(\lambda(i, j)), \lambda(k, l) \rangle$. Substituting $\lambda=\lambda^{(\nu)}$ yields $m = \langle \gamma(\nu), \nu \rangle$ if $i+j \ge N$ and $k+l \ge N$, and 1 otherwise. Thus, $\langle \gamma(\lambda(i, j)), \lambda(k, l) \rangle=\theta_m(i,j,k,l)$. The right hand side is $\delta (\omega^{(q))}$. Using the relation $\delta(\omega^{(q)}) = \theta_{q^N}$, we get 
\begin{align*}
\langle \gamma(\lambda(i, j)), \lambda(k, l) \rangle=\theta_m(i,j,k,l)=\theta_{q^N}(i,j,k,l)=\delta(\omega^{(q)})(i,j,k,l),
\end{align*}
that is exactly \eqref{assoczesting}.
\end{proof}

\begin{remark}
To apply Proposition \ref{prop:cyclic_zesting}, one selects an element $\nu \in \Gamma_0$. Since the cohomology class $[\lambda^{(\nu)}] \in H^2(\mathbb{Z}/N\mathbb{Z}, \Gamma_0)$ depends only on the coset $\nu \Gamma_0^N$ (where $\Gamma_0^N = \{g^N \mid g \in \Gamma_0\}$), due to the isomorphism $H^2(\mathbb{Z}/N\mathbb{Z}, \Gamma_0) \cong \Gamma_0/\Gamma_0^N$, it suffices to consider a set of representatives $\nu$ for this quotient group. Once $\nu$ is fixed, one calculates $m = \langle \gamma(\nu), \nu \rangle \in \ku^*$. As $\ku$ is algebraically closed of characteristic zero, the equation $q^N = m$ always possesses $N$ distinct solutions $q_0, q_1, \dots, q_{N-1}$ in $\ku^*$. Each of these roots $q_k$ defines a 3-cochain $\omega^{(q_k)}$ and consequently yields $N$ distinct associative zesting data $(\gamma, \lambda^{(\nu)}, \omega^{(q_k)})$ associated with the chosen representative $\nu$.
\end{remark}

\subsection{Concrete examples of zesting of pointed Hopf algebras}\label{subsec:concrete examples}

Before presenting concrete examples, let us outline the process for computing associative zesting data $(\gamma, \lambda, \omega)$ in the pointed case $H = \mathcal{B} \# \ku \Gamma$, based on the preceding results. We emphasize that constructing the zesting datum itself depends  only on the group $\Gamma$ and the structure of the Yetter-Drinfeld module $V$ associated with the skew-primitive generators $(x_i)_{1\leq i\leq \theta}$. Specifically, the module $V$ determines its support $\Supp(V)$ (and thus the universal grading group $G = \Gamma/\Supp(V)$) and the action of potential subgroups $\Gamma_0 \le \Supp(V)$ (needed to check the  condition for $\gamma$). 

Determining the appropriate $(\gamma, \lambda, \omega)$ and verifying the condition \eqref{assoczesting} does not require knowing a presentation (i.e., the defining relations) of the pre-Nichols algebra $\mathcal{B}$. Therefore, in the following examples, we will focus only on realizing the vector space $V$ as a Yetter-Drinfeld module over a group $\Gamma$ and computing the zestig data.

\subsubsection{Braidings of diagonal type and their realization}\label{subsec:diagonal_realization}

Many important Nichols algebras, particularly those relevant to the classification of pointed Hopf algebras over abelian groups, arise from braided vector spaces $(V, c)$ of \emph{diagonal type}. 

A braided vector space $(V, c)$ is of \emph{diagonal type} if there exists a basis $(x_i)_{i \in I}$ for $V$ and a matrix $\qf = (q_{ij})_{i,j \in I}$ of scalars $q_{ij} \in \ku^\times$ such that the braiding $c: V \otimes V \to V \otimes V$ on this basis is given by
\begin{align}\label{eq:diagonal_braiding_def}
c(x_i \otimes x_j) &= q_{ij} \, x_j \otimes x_i & \text{for all }& i, j \in I.
\end{align}
As the structure is determined by the \emph{braiding matrix} $\qf$, the  vector space is sometimes denoted by $V_{\qf}$ and the associated Nichols algebra by $\mathcal{B}_{\qf}$.

Braided vector spaces $V_{\qf}$ can be realized as Yetter-Drinfeld modules $\Gayd$ over an abelian group $\Gamma$. As the braiding in $\Gayd$ is given by $c_{V,W}(v \otimes w) = (v_{(-1)} \cdot w) \otimes v_{(0)}$, to realize $V_{\qf}$ as an object in $\Gayd$, we need to find:
\begin{enumerate}
    \item Group elements $(g_i)_{i \in I}$ in $\Gamma$,
    \item Characters $(\chi_j)_{j \in I}$ in $\widehat{\Gamma} = \Hom(\Gamma, \ku^\times)$,
\end{enumerate}
such that if we define the $\ku\Gamma$-coaction by $\rho(x_i) = g_i \otimes x_i$ and the $\ku\Gamma$-action by $h \cdot x_j = \chi_j(h) x_j$, the resulting Yetter-Drinfeld braiding $c(x_i \otimes x_j) = \chi_j(g_i) x_j \otimes x_i$ matches the desired braiding \eqref{eq:diagonal_braiding_def}. This requires the condition:
\begin{align}\label{eq:qij_realization_condition}
    q_{ij} = \chi_j(g_i) \quad \text{for all } i, j \in I.
\end{align}

\subsubsection{Associative zestings for $\qf$ of type $A(1|2)$}\label{ex:zesting_A12_revised}

Let $n>1$ be an integer and let $q \in \ku^\times$ be a primitive $n$-th root of unity. We consider the pointed Hopf algebra $H = \mathcal{B} \# \ku \Gamma$, where $\cB$ is a pre-Nichols algebra of super type $A(1|2)$ \cite{AnAn} and $\Gamma = C_2 \times C_{n^2} = \langle \alpha_1, \alpha_2 \rangle$. The 2-dimensional Yetter-Drinfeld module $V=\ku x_1 \oplus \ku x_2$ has braiding matrix $\qf = \begin{pmatrix} -1 & 1 \\ q^{-1} & q \end{pmatrix}$, realized via $g_1 = \alpha_1$, $g_2 = \alpha_2^n$, and appropriate characters $\chi_1, \chi_2 \in \widehat{\Gamma}$. The universal grading group is 
\begin{align*}
G &= U(H) \cong \Gamma / \Supp(V) \cong C_n, & \text{where }\Supp(V) &= \langle g_1, g_2 \rangle \cong C_2 \times C_n.
\end{align*}
Since $G=C_n$ is cyclic, we aim to apply Proposition \ref{prop:cyclic_zesting}.

\textbf{Data $(\Gamma_0, \Phi)$.}

We need a subgroup $\Gamma_0 \le \Supp(V)$ and a group homomorphism $\Phi: \Gamma_0 \to \widehat{\Gamma}$ satisfying the condition $\chi_j(g) = \Phi(g)(g_j)$ for all $g \in \Gamma_0, j=1,2$.

Suppose that $g_2=\alpha_2^n\in \Gamma_0$. Then condition \eqref{eq:central-subgr-condition} says that $q^{-1} = \Phi(g_2)(\alpha_1)=  \pm 1$, which is a contradiction to the fact that $q$ is a primitive $n$-th root.

Therefore, we take $\Gamma_0 = \langle g_1 \rangle = \langle \alpha_1 \rangle \cong C_2$, and search for group homomorphisms $\Phi: \Gamma_0 \to \widehat{\Gamma}$. This $\Phi$ is determined by the character $\chi' = \Phi(g_1) \in \widehat{\Gamma}$. Now condition \eqref{eq:central-subgr-condition} requires:
\begin{itemize}
    \item $\chi_1(g_1) = \chi'(g_1) \implies -1 = \chi'(\alpha_1)$.
    \item $\chi_2(g_1) = \chi'(g_2) \implies 1 = \chi'(\alpha_2^n)$.
\end{itemize}
Write $\chi' = \nu_1^a \nu_2^b$. By the above equalities, $a$ needs to be odd, so $a=1$, and $b$ needs to be a multiple of $n$, ie $b=nk$ for $k \in \{0, \dots, n-1\}$. Hence, potential characters are $\chi'_k = \nu_1 \nu_2^{nk}$.
Furthermore, $\Phi$ must be a group homomorphism of $\Gamma_0 \cong C_2$. This requires $2k \equiv 0 \pmod n$. Hence, the possible values for $k \in \{0, \dots, n-1\}$ satisfying $2k \equiv 0 \pmod n$ depend on the parity of $n$:
\begin{itemize}
    \item If $n$ is odd, then the only possible map is $\Phi_0$, where $\Phi_0(\alpha_1) = \nu_1$. 
    \item If $n$ is even, then there are two possibilities, either $\Phi_0(\alpha_1) = \nu_1$ or else $\Phi_{n/2}(\alpha_1) = \nu_1 \nu_2^{n(n/2)}$. 
\end{itemize}

\textbf{Applying Proposition \ref{prop:cyclic_zesting}.}

We will follow the notation from Proposition \ref{prop:cyclic_zesting}. We must use $\nu = \alpha_1 \in \Gamma_0$.

\begin{itemize}[leftmargin=*]\renewcommand{\labelitemi}{$\circ$}
\item Using the map $\Phi_0$ we have $m =  \Phi_0(\alpha_1)(\alpha_1) = \nu_1(\alpha_1) = -1$. We need $s \in \ku^*$ such that $s^n = -1$. Set $\lambda = \lambda^{(g_1)}$ and $\omega = \omega^{(s)}$. By Proposition \ref{prop:cyclic_zesting}, $(\gamma_0, \lambda^{(g_1)}, \omega^{(s)})$ is an associative $C_n$-zesting datum. There are $n$ choices for $s$.

\item If $n$ is even and we use the map $\Phi_{n/2}$, then 
\begin{align*}
m &= \langle \gamma_{n/2}(\alpha_1), \alpha_1 \rangle = \Phi_{n/2}(\alpha_1)(\alpha_1) = (\nu_1 \nu_2^{n(n/2)})(\alpha_1) = \nu_1(\alpha_1) \nu_2^{n(n/2)}(\alpha_1) =-1.
\end{align*}
Again, we need $s^n = -1$.  Proposition \ref{prop:cyclic_zesting} gives $n$ zestings $(\gamma_{n/2}, \lambda^{(g_1)}, \omega^{(s)})$.
\end{itemize}

In summary, for $\Gamma_0 = \langle g_1 \rangle$, non-trivial associative zestings can be constructed using Proposition \ref{prop:cyclic_zesting}. If $n$ is odd, there is one family (determined by $\Phi_0$) of $n$ zestings. If $n$ is even, there are two families (determined by $\Phi_0$ and $\Phi_{n/2}$), each one has $n$ zestings.

\subsubsection{Associative zestings for Fomin-Kirillov algebras}
Now we consider a family of examples of pointed Hopf algebras of non-diagonal type.

The Fomin-Kirillov algebra $\mathcal{FK}_3$ is the Nichols algebra $\mathcal{B}(V)$ of the 3-dimensional braided vector space $V = \ku\{x_0, x_1, x_2\}$ whose braiding is $c(x_i \otimes x_j) = -x_{2i-j \pmod 3} \otimes x_i$, which is non-diagonal and can be realized over the symmetric group $\mathbb{S}_3$. More generally \cite{GI-Sanchez}, we can realize $V$ is realized as a Yetter-Drinfeld module $V_k$ over the group algebra $H = \ku\Gamma$, where $\Gamma = \mathbb{G}_{3,\ell} = \langle s, t \mid s^3 = t^{2\ell}=1, ts=s^2t \rangle$ ($\ell\in\mathbb N$ can be chosen arbitrarily) and the structure depends on $k \in \{0, \dots, \ell-1\}$ as follows:
\begin{itemize}[leftmargin=*]
\item The coaction $\rho: V_k \to H \otimes V_k$ is given by
\begin{align*}
\rho(x_i) &= g_i \otimes x_i, & \text{where } &g_i := s^i t^{2k+1} \quad \text{for } i \in \{0, 1, 2\}.
\end{align*}

\item The  action of each $g\in\Gamma$ is given by 
\[ g \cdot x_i = (-1)^r x_{g \cdot i}, \]
where $g\cdot i\in \{0, 1, 2\}$ is given by
\[ g \cdot i := j \quad \text{if and only if} \quad g g_i g^{-1} = g_{j}. \]

\end{itemize}

Let $N=2k+1$, $d=\gcd(N, 2\ell)$.
The support and the universal grading group are
\begin{align*}
\Supp(V_k) &= \langle s, t^N \rangle, &
U(H) &\cong \Gamma / \Supp(V_k) \cong C_d.
\end{align*}
For further simplicity, we impose the slightly stronger condition that $N$ divides $\ell$. Thus, we proceed with $G = U(H) \cong C_N$.

First, we identify an appropriate subgroup $\Gamma_0$, which must be central and contained in $\Supp(V_k) = \langle s, t^N \rangle$. The center of $\Gamma = \mathbb{G}_{3,\ell}$ is $Z(\Gamma) = \langle t^2 \rangle$.  Then we can choose
$$\Gamma_0 = Z(\Gamma) \cap \Supp(V_k) = \langle t^{2\gcd(N,\ell)} \rangle = \langle t^{2N} \rangle \cong C_{\ell/N}.$$

Next, we look for group homomorphisms $\Phi: \Gamma_0 \to \widehat{\mathbb{G}}_{3,\ell}$ that satisfy the centrality condition in Proposition \ref{prop:relative_r_pointed}, namely 
\begin{align*}
g \cdot x_j &= \Phi(g)(g_j) x_j & \text{for all }g \in \Gamma_0, & j\in\{0,1,2\}.
\end{align*}
Recall that $\widehat{\mathbb{G}}_{3,\ell} = \langle \chi \rangle \cong C_{2\ell}$, where $\chi(s)=1$ and $\chi(t)=\zeta_{2\ell}$. An homomorphism $\Phi: \Gamma_0 \to \widehat{\mathbb{G}}_{3,\ell}$ is determined by the image of the generator $g_0 = t^{2N}$, say $\Phi(g_0) = \chi^w$.
Now $w$ must hold the following two conditions:
\begin{enumerate}
\item $\Phi$ must be a group homomorphism. This requires 
\begin{align*}
\Phi(g_0)^{|\Gamma_0|} &= \Phi(g_0^{|\Gamma_0|}) = \Phi(e) = 1
&&\implies &
(\chi^w)^{\ell/N} &= \chi^{w\ell/N} \equiv  1.
\end{align*}
Thus, $2\ell$ must divide $w\ell/N$.
\item $\Phi$ must satisfy the centrality condition: This requires $\Phi(g_0)(g_j) = 1$, because $g_0$ is central and then $g_0 \cdot x_j = x_j$. Thus, $\chi^w(s^j t^N) = (\chi(t)^N)^w = \zeta_{2\ell}^{Nw} = 1$, which implies that $2\ell$ divides $Nw$.
\end{enumerate}
The integers $w$ (modulo $2\ell$) satisfying both conditions are precisely the multiples of $2\ell/d$, where $d = \gcd(N, \ell/N) = \gcd(2k+1, \ell/(2k+1))$.
Thus, there exist $d$ possible homomorphisms $\Phi$ satisfying the requirements, denoted $\Phi_a$ for $a \in \{0, 1, \dots, d-1\}$:
\[ \Phi_a(t^{2N}) = \chi^{a \cdot (2\ell/d)}.\]
Each $\Phi_a$ determines a relative $r$-form via the map $\gamma_a: \Gamma_0 \to \Alg(H,\ku)$.

Now we assemble the full associative zesting datum $(\gamma, \lambda, \omega)$ using Proposition \ref{prop:cyclic_zesting}. We need to choose $\gamma$, $\lambda$, and $\omega$.

\begin{itemize}[leftmargin=*]\renewcommand{\labelitemi}{$\triangleright$}
\item \emph{Choose $\gamma$}: Select one of the $d = \gcd(N, \ell/N)$ possible centrality maps, $\gamma = \gamma_a$, corresponding to $\Phi_a(t^{2N}) = \chi^{a \cdot (2\ell/d)}$ for $a \in \{0, 1, \dots, d-1\}$.

    \item \emph{Choose $\lambda$:} Select a 2-cocycle $\lambda \in Z^2(G, \Gamma_0) = Z^2(C_N, C_{\ell/N})$. The cohomology classes are classified by $H^2(C_N, \Gamma_0) \cong \Gamma_0/\Gamma_0^N$. The order of this group is $d' = |\Gamma_0/\Gamma_0^N| = \gcd(N, |\Gamma_0|) = \gcd(N, \ell/N) = d$. We choose $\lambda$ to be the standard cocycle $\lambda = \lambda^{(\nu_s)}$ defined by \eqref{eq:cyclic_lambda2_nu_def_final}, where $\nu_s \in \Gamma_0$ is a representative for one of the $d$ distinct cosets of $\Gamma_0^N$ in $\Gamma_0$. Let $s \in \{0, 1, \dots, d-1\}$ index this choice. A standard choice is $\nu_s = (t^{2N})^s$.

    \item \emph{Determine $m$ and find $q$}: Calculate the scalar $m = m_{a,s} := \langle \gamma_a(\nu_s), \nu_s \rangle$. Substituting $\nu_s = (t^{2N})^s$ and $\gamma_a \leftrightarrow \Phi_a \leftrightarrow \chi^{w_a}$ with $w_a = a \cdot 2\ell/d$:
        $$ m_{a,s} = \langle \gamma_a( (t^{2N})^s ), (t^{2N})^s \rangle = \chi^{w_a}((t^{2N})^s) = (\chi(t))^{w_a \cdot 2Ns} = (\zeta_{2\ell})^{ (a \cdot 2\ell/d) \cdot 2Ns } $$
        $$ m_{a,s} = \zeta_{d}^{a N s}  $$
        We now need to find $q \in \ku^*$ such that $q^N = m_{a,s} = \zeta_{d}^{aNs}$. There are $N$ distinct solutions for $q$. Let $q_{a,s,j}$ (for $j=0, \dots, N-1$) denote these $N$ roots.

    \item \emph{Choose $\omega$:} Set $\omega = \omega^{(q_{a,s,j})}$ using the standard 3-cochain defined in \eqref{eq:cyclic_omega3_nu_def} .
\end{itemize}

By Proposition \ref{prop:cyclic_zesting}, the triple $(\gamma_a, \lambda^{(\nu_s)}, \omega^{(q_{a,s,j})})$ forms a valid associative $G$-zesting datum for $H$.

In summary, for $H = \mathcal{FK}_3 \# \ku \mathbb{G}_{3,\ell}$ (assuming $N=2k+1$ divides $\ell$), we identified $d = \gcd(N, \ell/N)$ possible centrality maps $\gamma_a$. For each $\gamma_a$ and each of the $d$ cohomology classes for $\lambda$ (represented by $\lambda^{(\nu_s)}$), we calculated the scalar $m_{a,s} = \zeta_{d}^{aNs}$. We can always find $N$ distinct $N$-th roots $q$ of $m_{a,s}$. Each resulting triple $(\gamma_a, \lambda^{(\nu_s)}, \omega^{(q)})$ constitutes a valid associative $C_N$-zesting datum. Each datum defines a coquasi-Hopf algebra $H^\lambda$ via Theorem \ref{thm:pointed_zesting_datum}.

\subsection{Braided zestings of pointed Hopf Algebras}

We now turn to braided zestings for pointed Hopf algebras. This requires starting with a finite-dimensional coquasitriangular pointed Hopf algebra $(H, r)$, where $r: H \otimes H \to \ku$ is the universal $r$-form. A necessary condition for $H$ to admit such a structure is that its group of group-like elements, $\Gamma = G(H)$, must be abelian. Following the classification of coquasitriangular structures on pointed Hopf algebras \cite{bontea2017pointed}, it is enough to consider $H= \mathcal{B}(V) \# \ku\Gamma$, where $V$ and $r$ are attached to a triple $(r_0,V,r_1)$ as follows:
\begin{itemize}
\item $r_0: \Gamma \times \Gamma \to \ku^*$ is a bicharacter,
\item $V \in \mathcal{Z}_{sym}(\cC(\Gamma, r_0))_-$, which means that $V\in\Gayd$ has a basis $(x_i)_{i\in I}$ with coaction on $x_i$ given by $g_i\in\Gamma$ such that $r_0(g_i,-)=r_0(-,g_i)^{-1}$, and action given by $\chi_i=r_0(g_i,-)\in\widehat{\Gamma}$ such that $\chi_i(g_i)=-1$.
\item $r_1:V\otimes V \to \ku$ a morphism of $\Gamma$-comodules.
\end{itemize}
Here, $r_0$ and $r_1$ correspond to the restrictions of $r$ to $\Gamma\times\Gamma$ and $V\otimes V$, respectively.

\begin{proposition}\label{prop:g_is_character_inducing}
Let $H = \mathcal{B}(V)\#\Bbbk\Gamma$ be a finite-dimensional coquasitriangular pointed Hopf algebra with universal $r$-form $r$ constructed from the triple $(r_0, V, r_1)$.

Then any group-like element $g \in \Gamma$ is character-induced with respect to this $U(H)$-grading. The corresponding group character $\lambda_g: U(H) \to \Bbbk^*$ is given by 
\begin{align*}
\lambda_g(s) &= r_0(\gamma_s, g)r_0(g, \gamma_s), &&\text{where }\gamma_s \in \Gamma \text{ is any representative of }s \in U(H).
\end{align*}
\end{proposition}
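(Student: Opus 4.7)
My plan is to verify Proposition~\ref{prop:g_is_character_inducing} in two stages: first, that the formula $\lambda_g(s) = r_0(\gamma_s, g)\, r_0(g, \gamma_s)$ indeed defines a group character $U(H) \to \ku^\times$; second, that $g$ satisfies the character-inducing condition of Definition~\ref{def:pi_character_inducing_grouplike} on every $U(H)$-homogeneous component $H_s$ of $H$.

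For well-definedness, I would show that $r_0(h,g)\, r_0(g,h) = 1$ for every $h \in \Supp(V)$. Since $\Supp(V)$ is generated by the coaction grades $(g_i)_{i \in I}$ and $r_0$ is a bicharacter, this reduces by induction on word length to the generators, for which $r_0(g_i,g)\, r_0(g,g_i) = 1$ follows immediately from the defining condition $r_0(g_i,-) = r_0(-,g_i)^{-1}$ of the triple $(r_0,V,r_1)$. Replacing $\gamma_s$ by $\gamma_s h$ with $h \in \Supp(V)$ therefore does not change the product $r_0(\gamma_s,g)\, r_0(g,\gamma_s)$, and bicharacteriality of $r_0$ in each slot gives $\lambda_g(ss') = \lambda_g(s)\lambda_g(s')$.

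For the character-inducing condition, I would first describe $H_s$ explicitly using the bosonization. Since the cocentral projection $\pi$ annihilates positive Nichols degree and restricts to the quotient $\Gamma \twoheadrightarrow \Gamma/\Supp(V)$ on degree zero (as in the proof of Proposition~\ref{Universal grading pointed}), applying $(\pi \otimes \id)\Delta$ to $b \# k$ (with $b$ both Nichols-homogeneous and $\Gamma$-coaction homogeneous) retains only the $1 \otimes b$ contribution from $\Delta_{\mathcal{B}}(b)$, producing $[k] \otimes (b \# k)$. Hence $H_s$ is spanned by such basis elements with $[k] = s$, and by linearity it suffices to check the condition on each. For $x = 1 \# k$, $\Delta(x) = k \otimes k$ and the condition collapses to the very definition of $\lambda_g([k])$. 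For $x = b \# k$ with $b$ of positive Nichols degree, $\epsilon(x) = 0$, and one must prove $r(x_{(1)}, g)\, r(g, x_{(2)}) = 0$.

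The main obstacle is the vanishing lemma: $r(y, 1\#g) = r(1\#g, y) = 0$ whenever $y \in H$ has positive Nichols degree. I would prove this by induction on Nichols degree. The base case $y = x_i \# 1$ is forced by the coquasitriangular axiom $r(h_{(1)},k_{(1)})\, h_{(2)} k_{(2)} = k_{(1)} h_{(1)}\, r(h_{(2)},k_{(2)})$: taking $(h,k) = (x_i, g)$ and expanding both sides in the linearly independent basis elements $g$, $x_i g$, $g g_i$ forces $r(x_i \#1,\, 1 \# g) = 0$, while the comparison on $x_i g$ recovers the known identity $r_0(g_i, g) = \chi_i(g)$; the symmetric argument with $(h,k) = (g,x_i)$ gives $r(1\#g,\, x_i\#1) = 0$. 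The inductive step uses multiplicativity of $r$ in each slot together with the factorization $b \# k = (b\#1)(1\#k)$ to reduce higher-degree monomials to the degree-one generators. With this lemma available, expanding $\Delta(b \# k)$ via the bosonization coproduct and splitting the sum $\sum r(x_{(1)}, g)\, r(g, x_{(2)})$ according to the Nichols degree of the left tensor factor, every term vanishes: summands with positive-degree first factor are killed by $r(-, g)$, and the summand coming from $1 \otimes b$ in $\Delta_{\mathcal{B}}(b)$ is killed by $r(g,-)$ acting on a positive-degree second factor. This matches $\lambda_g([k])\epsilon(x) = 0$, completing the verification.
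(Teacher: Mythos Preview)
Your proof is correct and follows the same overall structure as the paper's: first check that $\lambda_g$ is well-defined and multiplicative using the bicharacter $r_0$ and the symmetry condition $r_0(g_i,-)=r_0(-,g_i)^{-1}$, then reduce the character-inducing identity on $H_s$ to the two cases $b=1$ (immediate) and $b$ of positive Nichols degree (where both sides vanish).

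The one substantive difference is how the vanishing for positive degree is justified. The paper simply asserts that $r(x,y)=r(y,x)=0$ whenever $x=b\#\gamma$ with $b\in\mathcal{B}^n(V)$, $n>0$, implicitly relying on the explicit shape of $r$ coming from the classification in \cite{bontea2017pointed} (note the paper's claim ``for all $y\in H$'' is in fact stronger than what holds or is needed---only $y=1\#g$ is used). You instead \emph{derive} the needed special case $r(b\#k,\,1\#g)=r(1\#g,\,b\#k)=0$ directly from the coquasitriangular axioms: the base case $b=x_i$ comes from comparing coefficients in $r(h_{(1)},k_{(1)})h_{(2)}k_{(2)}=k_{(1)}h_{(1)}r(h_{(2)},k_{(2)})$ (using $g_i\neq 1$, which follows from $\chi_i(g_i)=-1$), and the inductive step uses multiplicativity of $r$ against the group-like $g$. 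This makes your argument more self-contained, at the cost of a short extra computation.
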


\begin{proof}
An element $g \in \Gamma$ is \emph{$\pi$-character-inducing} (Definition~\ref{def:pi_character_inducing_grouplike}) if
\begin{align*}
r(x_{(1)},g)r(g,x_{(2)}) &= \lambda_g(s)\epsilon(x)  &&\text{for all }x \in H_s, 
\end{align*}
where $\lambda_g: U(H) \to \Bbbk^*$ is the specific group character defined in the proposition statement.

Let $\lambda_g: U(H) \to \Bbbk^*$ be defined by $\lambda_g(s) := r_0(\gamma_s, g)r_0(g, \gamma_s)$, where $\gamma_s \in \Gamma$ is an arbitrary representative of $s \in U(H)$. It is well-defined since $V \in \mathcal{Z}_{sym}(\mathcal{C}(\Gamma, r_0))_-$  implies that $r_0(k,g)r_0(g,k)=1$ for all $k \in \Supp(V)$. The map $\lambda_g$ is a group homomorphism since  $r_0$ is bicharacter.
Now we want to verify that 
\begin{align}\label{eq:character-induced-example}
r(x_{(1)},g)r(g,x_{(2)}) &= \lambda_g(s)\epsilon(x) &\text{for all }x \in H_s, s\in U(H).
\end{align}

Recall that the $s$-homogeneous component $H_s$ consists of linear combinations of elements $x=b\#\gamma$, where $b \in \mathcal{B}(V)$ and $\gamma \in \Gamma$ is a representative of $s \in U(H)$. 
As $\epsilon(x)=r(x,y)=r(y,x)=0$ when $b\in \mathcal{B}^n(V)$ for $n>0$ (for all $y\in H$), \eqref{eq:character-induced-example} trivially holds in this case, so it remains to consider $b=1$. In this case $x=1\# \gamma$, so $x_{(1)}=x_{(2)}=1\# \gamma$, and \eqref{eq:character-induced-example} holds by definition.
\end{proof}

Due to the previous results we get the following characterization for braided $U(H)$-zestings of $H$:

\begin{corollary}
Braided zestings of $H$ correspond to 5-uples $(\Gamma_0,\Phi,\lambda, \Phi, \omega, t)$, where 
\begin{enumerate}[leftmargin=*, label=(\roman*)]
\item $\Gamma_0$ is a subgroup of $\Supp V$ and $\Phi: \Gamma_0 \to \widehat{\Gamma}$ is a group homomorphism such that 
\begin{align*}
r_0(g,g_i)^{-1}&= \langle g_i, \Phi(g) \rangle && \text{for all }g\in \Gamma_0, 1\le i\le \theta.    
\end{align*}
\item $(\Phi, \lambda, \omega)$ is an associative $U(H)$-zesting, and 
\item $t\in C^2(U(H),\ku^*)$ is such that equations \eqref{eq:BZ2} and \eqref{eq:BZ3} holds, that is:
\begin{align*}
\frac{\omega(g_1, g_2, g_3) \omega(g_2, g_3, g_1)}{\omega(g_2, g_1, g_3)} &= \frac{t(g_1, g_2) t(g_1, g_3)}{t(g_1, g_2 g_3)},
\\
r_0(\lambda(g_1,g_2), g_3) r_0(g_3, \lambda(g_1,g_2)) &= \frac{\omega(g_1, g_2, g_3) \omega(g_3, g_1, g_2)}{\omega(g_1, g_3, g_2)} \frac{t(g_1, g_3) t(g_2, g_3)}{t(g_1 g_2, g_3)},
\end{align*}
for all $g_1, g_2, g_3 \in U(H)$.
\end{enumerate}
\end{corollary}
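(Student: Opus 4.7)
The plan is to specialize Definition~\ref{def: braided zesting for Hopf} to $H = \mathcal{B}(V)\#\ku\Gamma$ with $G = U(H)$, translating each piece of a braided zesting datum into the tuple $(\Gamma_0, \Phi, \lambda, \omega, t)$ of the statement.

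The first step is to rephrase the relative $r$-form $\gamma$. The proof of Proposition~\ref{Universal grading pointed} gives $G(H^{\mathrm{co}\pi}) = \Supp(V)$ for the universal cocentral projection $\pi: H \twoheadrightarrow \ku\, U(H)$, so choosing $\Gamma_0 \le G(H^{\mathrm{co}\pi})$ is equivalent to choosing $\Gamma_0 \le \Supp(V)$, and centrality is automatic since $\Gamma$ is abelian. Proposition~\ref{prop:relative_r_pointed}, combined with Theorem~\ref{inclusionequivalences}, then identifies relative $r$-forms on $\ku\Gamma_0$ with group homomorphisms $\Phi: \Gamma_0 \to \widehat{\Gamma}$ through $\langle \gamma(g), g' \rangle = \langle g', \Phi(g) \rangle$ for $g' \in \Gamma$. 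The extra constraint $f'|_{G(H^{\mathrm{co}\pi})} = \gamma$ from Definition~\ref{def: braided zesting for Hopf} becomes, after evaluating at each $g_i$, the identity $\langle g_i, \Phi(g) \rangle = r^{-1}(g, g_i) = r_0(g, g_i)^{-1}$, which is precisely condition~(i).

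The remaining axioms follow largely by inspection. Proposition~\ref{prop:g_is_character_inducing} makes every group-like in $\Gamma$, and in particular every $\lambda(g_1, g_2) \in \Gamma_0$, automatically $\pi$-character-induced, so that clause of Definition~\ref{def: braided zesting for Hopf} adds no restriction. The associative zesting axioms together with the symmetry $\lambda(g_1, g_2) = \lambda(g_2, g_1)$ are absorbed into condition~(ii). Finally, the cochain identities \eqref{eq:BZ2}--\eqref{eq:BZ3} transcribe verbatim into~(iii) once one observes that $\lambda(g_1, g_2)$ and representatives of $g_3 \in U(H)$ lie in $\Gamma$, so that $r$ evaluates through its restriction $r_0$ on $\Gamma \times \Gamma$.

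The main subtlety, to be handled carefully, is the translation of $f'|_{G(H^{\mathrm{co}\pi})} = \gamma$: I must confirm that this equality in $H^\circ$ really reduces to the single pointwise identity on the $g_i$. This follows because both $f'(g)$ and $\gamma(g)$ are algebra maps on the bosonization $\mathcal{B}(V) \# \ku\Gamma$ that vanish on $V$ for degree reasons, so they are determined by their restrictions to $\ku\Gamma$; compatibility with the relative $r$-form realization of $\gamma$ further requires the identity $r_0(g_i, g) = r_0(g, g_i)^{-1}$, which holds thanks to the hypothesis $V \in \cZ_{sym}(\cC(\Gamma, r_0))_-$ built into the classification of coquasitriangular structures used to describe $(H, r)$.
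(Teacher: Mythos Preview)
Your proposal is correct and follows the same route the paper implicitly takes: the corollary is stated with no proof beyond ``Due to the previous results'' and a \qed, and you have correctly identified those results as Proposition~\ref{Universal grading pointed} (giving $G(H^{\mathrm{co}\pi})=\Supp V$), Proposition~\ref{prop:relative_r_pointed} (translating $\gamma$ into $\Phi$), Proposition~\ref{prop:g_is_character_inducing} (making the $\pi$-character-induced condition automatic), and the direct specialization of Definition~\ref{def: braided zesting for Hopf}. Your discussion of the subtlety in matching $f'|_{\Gamma_0}=\gamma$ with the condition on the $g_i$, via the symmetry $r_0(g_i,-)=r_0(-,g_i)^{-1}$ coming from $V\in\cZ_{sym}(\cC(\Gamma,r_0))_-$, is more explicit than anything in the paper and is handled correctly.
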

\qed

\begin{example}
Let $\Gamma= \Z/4\Z$. Fix a generator $\sigma$ of $\Gamma$ and $r_0:\Z/4\Z \times \Z/4\Z \to \ku^*$ be the bicharacter such that $r_0(\sigma,\sigma)=i$. Let $V$ be a $\Z/4\Z$-graded module such that $V \in \mathcal{Z}_{sym}(\cC(\Z/4\Z,r_0))_-$. In this case $\Supp (V)= \left\langle g\right\rangle\cong \Z/2\Z$, where $g:= \sigma^2\in \Z/4\Z$, and following Proposition \ref{Universal grading pointed}, $U(H)\cong \Z/2\Z$.

Then there exists a braided zesting datum given by:
\begin{itemize}
\item The group homomorphism $\Phi: \Z/2\Z \to \widehat{\Z/4\Z}$ such that $\left\langle \sigma, \Phi(g)\right \rangle=-1$.
\item The non-trivial 2-cocycle $\lambda \in H^2(\Z/2\Z, \Z/2\Z)$ such that $\lambda(1,1)=g$.
\item The 3-cochain $\omega \in C^3(\Z/2,\ku^*)$ with $\omega(1,1,1)= \zeta$, where $\zeta$ denotes a 4th-root of unity.
\item The 2-cochain $t\in C^2(\Z/2\Z, \ku^*)$ with $t(1,1)= \eta$ where $\eta^2=\zeta$.
\end{itemize}

\end{example}


\bibliographystyle{alpha}
\bibliography{references}

\end{document}